\newtheorem{thm}{Theorem}[section]
\newtheorem{lem}[thm]{Lemma}
\newtheorem{fct}[thm]{Fact}
\newtheorem{prop}[thm]{Proposition}
\newtheorem{cor}[thm]{Corollary}
\newtheorem{defn}[thm]{Definition}
\newtheorem{qus}{Question}
\def\semequiv{\kern3pt\raisebox{6pt}{\rotatebox{180}{$\cong$}}\kern3pt}
\title{Clarifying ordinals}
\author{Noah Schweber}
\date{July 2024}
\begin{document}

\maketitle

\begin{abstract} We use forcing over admissible sets to show that, for every ordinal $\alpha$ in a club $C\subset\omega_1$, there are copies of $\alpha$ such that the isomorphism between them is not computable in the join of the complete $\Pi^1_1$ set relative to each copy separately. Assuming $\mathsf{V=L}$, this is close to optimal; on the other hand, assuming large cardinals the same (and more) holds for every projective functional.
\end{abstract}

\tableofcontents

\section{Introduction}

In this paper we investigate the following general notion:

\begin{defn} Given a functional $\mathcal{F}: x\mapsto \mathcal{F}^x$ (e.g. $x\mapsto x'$, $x\mapsto\mathcal{O}^x$, $x\mapsto x^\sharp$, etc.) and a class of countable structures $\mathbb{K}$ which is closed under isomorphisms, say that $\mathcal{F}$ {\em clarifies} $\mathbb{K}$ iff for each $\mathfrak{A}\cong\mathfrak{B}\in\mathbb{K}$ each with domain $\omega$ there is an isomorphism $i:\mathfrak{A}\cong\mathfrak{B}$ which is computable from $\mathcal{F}^\mathfrak{A}\oplus\mathcal{F}^\mathfrak{B}$. 

Given a single countable structure $\mathfrak{A}$ and a functional $\mathcal{F}$, we say {\em $\mathcal{F}$ clarifies $\mathfrak{A}$} if $\mathcal{F}$ clarifies (in the sense of the above paragraph) the class of structures isomorphic to $\mathfrak{A}$.
\end{defn}

Of particular interest will be the set $\mathbb{W}$ of countable well-orderings.

For example, consider the functional $\mathcal{O}^\Box:x\mapsto\mathcal{O}^x$ sending each real to the canonical $\Pi^1_1$-complete set relative to that real (the ``$\Box$"-superscript is to distinguish this functional from the set of natural numbers $\mathcal{O}=\mathcal{O}^\emptyset$; apart from this, calligraphic letters will always denote functionals in this paper). It is easy to see that if $\mathfrak{A},\mathfrak{B}$ are isomorphic structures with domain $\omega$, then --- conflating structures with domain $\omega$ and their atomic diagrams, as usual --- an isomorphism between them is computable from $\mathcal{O}^{\mathfrak{A}\oplus\mathfrak{B}}$, but this does {\em not} extend to showing that such an isomorphism can be computed from merely $\mathcal{O}^\mathfrak{A}\oplus\mathcal{O}^\mathfrak{B}$. 

However, in many cases this weaker computational power does in fact suffice: for instance, if $\alpha$ is a computable ordinal and $\mathfrak{A}\cong\mathfrak{B}\cong\alpha$ are copies with domain $\omega$ (perhaps not computable themselves), then the isomorphism between them is computable in $\mathcal{O}^\mathfrak{A}\oplus\mathcal{O}^\mathfrak{B}$. Essentially, $\mathcal{O}^\mathfrak{A}$ can compute the isomorphism between $\mathfrak{A}$ and the lowest-index computable well-ordering of $\omega$, and $\mathcal{O}^\mathfrak{B}$ can do the same for $\mathfrak{B}$; but these two lowest-index computable well-orderings are in fact equal, so we can compose one isomorphism with the inverse of the other to get the isomorphism $\mathfrak{A}\cong\mathfrak{B}$.  This raises the question of whether $\mathcal{O}^\Box$ (or a similar functional) clarifies all, or even ``most," countable ordinals.\footnote{This problem is related in spirit to the notions of {\em degrees of categoricity} and {\em classifiability of equivalence relations by (Borel) invariants}, but to the best of my knowledge has not specifically been treated before. However, the interested reader may consult Anderson/Csima \cite{AnCs16} and Hjorth/Kechris \cite{HjKe97}, respectively, for background on these topics.} 

We first show that the functional version of Kleene's $\mathcal{O}$ $$\mathcal{O}^\Box: x\mapsto\mathcal{O}^x$$ does {\em not} clarify $\mathbb{W}$. Our proof uses set-theoretic arguments and so gives a quite large upper bound on the first non-$\mathcal{O}^\Box$-clarified ordinal; we partially explain this by showing that the smallest $\mathcal{O}^\Box$-unclarifiable ordinal must be quite large (although there is still a gap between lower and upper bounds). We also explore the role of large cardinal hypotheses in analogues of these results for general projective functionals in place of $\mathcal{O}^\Box$ specifically. We end by presenting some open problems.

Apart from the functional notation described above, our notation throughout is standard. For background on admissible sets, we refer to Barwise \cite{Barw75} for general facts and to Ershov \cite{Ersh90} for the result (independently proved by Barwise) that set forcing preserves admissibility. We refer to Ash/Knight \cite{AsKn00} for background on computable structure theory.






\section{$\mathcal{O}^\Box$ does not clarify $\mathbb{W}$}

Recall that $\mathbb{W}$ is the set of countable well-orderings.

\begin{defn} For $n\in\omega$, an ordinal $\alpha$ is {\em $n$-potent} iff $L_{\alpha^\boxplus}\models$ ``There is no $\Sigma_n(L_{\alpha^\boxplus})$ injection from $\alpha$ to $\omega$."
\end{defn}
 
\begin{lem} There is a club of countable $n$-potent ordinals, for each $n$.
\end{lem}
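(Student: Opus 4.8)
The plan is to produce, for a fixed $n$, an honest club $C\subseteq\omega_1$ every element of which is $n$-potent. The one conceptual observation is that the condition in the definition of $n$-potency holds \emph{trivially} at $\omega_1$ itself: since there is no injection of $\omega_1$ into $\omega$ at all, \emph{a fortiori} $L_{\omega_1^\boxplus}\models$ ``there is no $\Sigma_n$ injection from $\omega_1$ into $\omega$''. Everything else is the standard reflection-and-condensation machinery for pushing this fact down to countably many ordinals at once.

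First I would fix $\Theta$ large enough that $L_\Theta$ contains $L_{\omega_1^\boxplus}$ as an element and models enough of $\mathsf{ZFC}^-$ to run Mostowski collapses, to see that $\omega_1$ is uncountable, and to compute $\beta\mapsto\beta^\boxplus$ correctly for $\beta\le\omega_1$ (such $\Theta$ clearly exist). Fixing Skolem functions for $L_\Theta$, write $X_\alpha$ for the Skolem hull of $\alpha\cup\{\omega_1,\omega_1^\boxplus\}$ inside $L_\Theta$, so each $X_\alpha$ is an elementary submodel of $L_\Theta$, and set $C=\{\alpha<\omega_1:X_\alpha\cap\omega_1=\alpha\}$. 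A routine argument shows $C$ is a club in $\omega_1$.

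Now I would fix $\alpha\in C$, let $\pi:X_\alpha\cong L_\delta$ be the transitive collapse (which is an $L$-level by condensation, as $L_\Theta\models\mathsf{V=L}$), and note $\pi(\omega_1)=\alpha$. Since the definition of $\boxplus$ is absolute among $L_\Theta$, $X_\alpha$, $L_\delta$ and $V$, also $\pi(\omega_1^\boxplus)=\alpha^\boxplus$; in particular $\alpha^\boxplus<\delta$, so $L_{\alpha^\boxplus+1}\subseteq L_\delta$. Moreover $L_\Theta\models$ ``there is no surjection of $\omega$ onto $\omega_1$'', hence by elementarity and the collapse $L_\delta\models$ ``$\alpha$ is uncountable''. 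Were $\alpha$ not $n$-potent there would be a subset $f\subseteq\alpha\times\omega$ which is $\Sigma_n$-definable over $L_{\alpha^\boxplus}$ with parameters and is an injection of $\alpha$ into $\omega$ --- this ``external'' reading is equivalent to the failure of the ``$L_{\alpha^\boxplus}\models\ldots$'' clause because the $\Sigma_n$-satisfaction relation over $L_{\alpha^\boxplus}$ is uniformly definable over $L_{\alpha^\boxplus}$ --- but then $f\in L_{\alpha^\boxplus+1}\subseteq L_\delta$ would witness that $\alpha$ is countable in $L_\delta$, a contradiction. Hence every member of $C$ is $n$-potent; since $n$ was arbitrary this proves the lemma, and in fact the single club $C$ works for all $n$ simultaneously.

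I expect no serious difficulty here; the ``main obstacle'' is really just two bookkeeping points, which are also the only places the argument is sensitive to the behaviour of $\boxplus$: (i) one needs $\beta\mapsto\beta^\boxplus$ to be absolutely definable --- which it is, being specified as the least $L$-level above $\beta$ with a fixed first-order property --- so that it commutes with the collapse $\pi$ and $\alpha^\boxplus$ genuinely lands below $\delta$; and (ii) one needs the equivalence, used above, between ``$L_{\alpha^\boxplus}\models$ there is no $\Sigma_n$ injection $\alpha\to\omega$'' and the parameter-allowing external statement, which is just the standard fine-structural fact that $\Sigma_n$-truth over an $L$-level is uniformly $\Sigma_n$ over that level.
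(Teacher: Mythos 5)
Your proof is correct and uses essentially the same reflection-and-condensation machinery as the paper: the paper simply collapses countable elementary submodels of $L_{\omega_1^\boxplus}$ directly and gets $n$-potency of $\alpha$ by elementarily transferring the (vacuously true) statement ``there is no $\Sigma_n$-definable injection of $\omega_1$ into $\omega$'' down to $L_{\alpha^\boxplus}$, whereas you reflect from an auxiliary larger $L_\Theta$ and derive the same conclusion by contradiction inside the collapsed $L_\delta\supsetneq L_{\alpha^\boxplus}$. Both routes produce the same club and the differences are stylistic rather than substantive.
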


\begin{proof} For each countable elementary $\mathfrak{M}\prec L_{\omega_1^\boxplus}$, let $m_\mathfrak{M}:\mathfrak{M}\cong L_{\mu_\mathfrak{M}}$ be the Mostowski collapse map. The set of $m_\mathfrak{M}$-images of $\omega_1$ as $\mathfrak{M}$ ranges over countable elementary substructures of $L_{\omega_1^\boxplus}$ is the desired club.
\end{proof}

\begin{thm}\label{upperbound} If $\alpha<\omega_1$ is $3$-potent, then $\alpha$ is not $\mathcal{O}^\Box$-clarified.
\end{thm}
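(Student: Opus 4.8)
The plan is to suppose toward a contradiction that $\mathcal{O}^\Box$ clarifies $\alpha$ and then, by forcing over $L_{\alpha^\boxplus}$, to produce an injection of $\alpha$ into $\omega$ that is $\Sigma_3$-definable over $L_{\alpha^\boxplus}$, contradicting $3$-potency. Let $\mathbb{P}\in L_{\alpha^\boxplus}$ be the poset of finite partial injections $\omega\rightharpoonup\alpha$ under reverse inclusion; a $\mathbb{P}$-generic is a bijection $g\colon\omega\to\alpha$, and pulling the order of $\alpha$ back along $g$ gives a copy of $\alpha$ with domain $\omega$ in which the order type of an element $k$ is exactly $g(k)$ --- in particular a condition $p$ decides the order type of $k$ iff $k\in\mathrm{dom}(p)$. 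I would force with the (weakly homogeneous) product $\mathbb{P}\times\mathbb{P}$ over $L_{\alpha^\boxplus}$, obtaining mutually generic copies $\mathfrak{A},\mathfrak{B}$ of $\alpha$ on $\omega$ with generic bijections $g_{\mathfrak{A}},g_{\mathfrak{B}}$; since $L_{\alpha^\boxplus}$ is countable these live in $V$. Two inputs are needed: (i) set forcing preserves admissibility (Barwise/Ershov), and the same proof, run with the hyperjump adjoined as a predicate, shows it preserves $\mathcal{O}^\Box$-admissibility, so $L_{\alpha^\boxplus}[\mathfrak{A},\mathfrak{B}]$ is $\mathcal{O}^\Box$-admissible and in particular contains $\mathcal{O}^{\mathfrak{A}},\mathcal{O}^{\mathfrak{B}}$; and (ii) over any $\mathcal{O}^\Box$-admissible structure containing a real $x$, the set $\mathcal{O}^x$ is $\Delta_1$-definable from $x$ by a fixed formula, since ``$k\in\mathcal{O}^x$'' is $\Sigma_1$ (the associated tree has a rank function) and also $\Sigma_1$ (otherwise that tree is ill-founded and has a branch, which is computable from $\mathcal{O}^x$ and hence already present). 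Being closed under relative computability, $L_{\alpha^\boxplus}[\mathfrak{A},\mathfrak{B}]$ contains any isomorphism computed from $\mathcal{O}^{\mathfrak{A}}\oplus\mathcal{O}^{\mathfrak{B}}$.

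By clarification there is an isomorphism $i\colon\mathfrak{A}\cong\mathfrak{B}$ with $i=\Phi_e^{\mathcal{O}^{\mathfrak{A}}\oplus\mathcal{O}^{\mathfrak{B}}}$ for some index $e$; since $i\in L_{\alpha^\boxplus}[\mathfrak{A},\mathfrak{B}]$, the statement ``$\Phi_e^{\mathcal{O}^{\dot{\mathfrak{A}}}\oplus\mathcal{O}^{\dot{\mathfrak{B}}}}$ is the isomorphism $\dot{\mathfrak{A}}\cong\dot{\mathfrak{B}}$'' is true in the extension and so is forced by some condition $p^*$ in the generic (weak homogeneity makes this clean). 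I would then unwind the oracle: $i(k)=m$ iff there is a finite string $\sigma$ with $\Phi_e^\sigma(k)=m$ all of whose oracle queries are answered correctly, and ``answered correctly'' is a finite conjunction of assertions of the form ``$T_j^{\mathfrak{A}}$ is well-founded / ill-founded'' (and the same for $\mathfrak{B}$), each $\Delta_1$ over the relevant extension by (ii) and each a statement about $\mathfrak{A}$ alone or about $\mathfrak{B}$ alone. Hence, over the ground model $L_{\alpha^\boxplus}$, the forcing relation on each such assertion --- for the corresponding factor of the product --- is $\Sigma_1$, so every statement I will need has only a bounded increase in quantifier complexity.

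The heart of the argument is to kill the reduction using genericity. Since $p^*$ forces that $\Phi_e$ is the (unique) isomorphism, it forces ``$\Phi_e(k)=m$ implies $\mathrm{otp}_{\dot{\mathfrak{A}}}(k)=\mathrm{otp}_{\dot{\mathfrak{B}}}(m)$'', i.e.\ ``$g_{\mathfrak{A}}(k)=g_{\mathfrak{B}}(m)$''. The strategy is a density argument: below any condition, find an extension $(p',q')$ deciding $\Phi_e^\sigma(k)=m$, with the queries of $\sigma$ forced correct, for some $k\notin\mathrm{dom}(p')$ and $m\notin\mathrm{dom}(q')$. Given such a condition, one extends the two coordinates incompatibly --- putting $k$ at a position $\xi$ on the $\mathfrak{A}$-side and $m$ at a different free position $\xi'$ on the $\mathfrak{B}$-side --- which is legitimate because forcing the correctness of the finitely many oracle queries is preserved under further extension; the resulting condition forces $g_{\mathfrak{A}}(k)=\xi\neq\xi'=g_{\mathfrak{B}}(m)$ while still forcing $\Phi_e(k)=m$, contradicting $p^*$.

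The obstacle is that this density can fail: a condition may force that \emph{every} way of deciding $\Phi_e(k)$ also commits the position $g_{\mathfrak{A}}(k)$ --- the finitely many $\Pi^1_1$-facts that $\mathcal{O}^{\mathfrak{A}}$ must certify to run the computation already pinning down $k$'s order type. But in that case the reduction, read through the forcing relation, \emph{defines} an injection of $\alpha$ into $\omega$ over $L_{\alpha^\boxplus}$: to a position $\xi<\alpha$ assign the least code of a (condition, computation) pair, below the bad condition, that is oracle-correct and forces its distinguished element into position $\xi$; such a pair exists for every $\xi$ because $p^*$ forces $\Phi_e$ surjective, and distinct positions get distinct codes. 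Counting quantifiers --- one level for the $\Pi^1_1$-queries being $\Sigma_1$ over an $\mathcal{O}^\Box$-admissible set, one for the forcing relation, one for the outer minimization --- keeps this definition within $\Sigma_3$, contradicting $3$-potency. The two places I expect real work are verifying that $\mathcal{O}^\Box$-admissibility is genuinely preserved by $\mathbb{P}$-forcing, and pinning down the ``commits the position'' dichotomy together with the $\Sigma_3$ bookkeeping; the remainder is routine forcing over admissible sets.
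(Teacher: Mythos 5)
Your overall strategy—force with a product of collapses of $\alpha$ over $L_{\alpha^\boxplus}$, then contradict $3$-potency with a $\Sigma_3$-definable injection—is the right family of ideas, but two steps in your plan do not go through as stated, and the paper's proof is organized precisely so as to avoid both of them.

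First, your input (i) is the real problem. You want $\mathcal{O}^{\mathfrak{A}}$ and $\mathcal{O}^{\mathfrak{B}}$ to be \emph{elements} of $L_{\alpha^\boxplus}[\mathfrak{A},\mathfrak{B}]$, justified by a claimed preservation of ``$\mathcal{O}^\Box$-admissibility.'' But the generic collapse $\mathfrak{A}$ of $\alpha$ is exactly the kind of real for which $\omega_1^{\mathfrak{A}}$ is pushed all the way up to $\alpha^\boxplus$, so $\mathcal{O}^{\mathfrak{A}}$ sits right at the boundary of the extension rather than inside it; it is not an element of $L_{\alpha^\boxplus}[\mathfrak{A},\mathfrak{B}]$, and consequently an isomorphism $\Phi_e^{\mathcal{O}^{\mathfrak{A}}\oplus\mathcal{O}^{\mathfrak{B}}}$ need not be either. (Relatedly, the ground model is not ``$\mathcal{O}^\Box$-admissible'' in your sense: there are reals $x\in L_{\alpha^\boxplus}$ with $\omega_1^x=\alpha^\boxplus$, so $\mathcal{O}^x\notin L_{\alpha^\boxplus}$; and your parenthetical ``and also $\Sigma_1$'' should read ``$\Pi_1$.'') The paper never needs the hyperjumps as set-elements: instead it computes, directly over the ground model, that the auxiliary relation ``some $(p',q')\le(p,q)$ forces a name to be an order-isomorphism between $\Phi_k^{g_i}$ and an ordinal'' is $\Sigma_3(L_{\alpha^\boxplus})$, and this surrogate for the forcing relation on $k\in\mathcal{O}^{g_i}$ is all the argument uses.

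Second, your density/dichotomy scheme is incomplete. The negation of your density statement is ``below some $r\le p^*$, every oracle-correct decision of $\Phi_e(k)=m$ has $k\in\mathrm{dom}(p')$ \emph{or} $m\in\mathrm{dom}(q')$,'' not the cleaner ``$k$'s position is always committed'' that your injection construction requires; the mixed case is not handled, and the well-definedness and injectivity bookkeeping for your $\xi\mapsto$(least code) map is left to the reader. The paper sidesteps the dichotomy entirely: it takes a single condition $(p,q)=(p,p)$ forcing $\Phi_e^{\mathcal{O}^{g_0}\oplus\mathcal{O}^{g_1}}:g_0\cong g_1$, and for each unused $\eta<\alpha$ it \emph{first} commits $\eta$ to the same fresh position $n=|p|$ in both coordinates, then $\Sigma_3$-selects a pair of oracle strings $(\sigma_\eta,\tau_\eta)$ under which $\Phi_e$ maps $n$ to $n$. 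Since there are only countably many such pairs, $3$-potency forces a collision $\sigma_\zeta=\sigma_\xi$, $\tau_\zeta=\tau_\xi$ with $\zeta\ne\xi$; the mixed condition $(p_\zeta,q_\xi)$ is then a legal condition (it is a product forcing, and each $\sigma$-constraint depends on one coordinate only) forcing $\Phi_e$ to identify $\zeta$ with $\xi$, which is the contradiction. You should replace both your admissibility-preservation input and your dichotomy with this direct complexity-calculation-plus-pigeonhole argument.
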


\begin{proof} Let $\alpha$ be $3$-potent. We start by defining the forcing we will use. Let $\mathbb{P}=Col(\omega,\alpha)\times Col(\omega,\alpha)$ (note that conditions in $\mathbb{P}$ are finite objects and so there is no need to relativize this definition to $L_{\alpha^\boxplus}$), and let $g,g_0,g_1$ be the canonical names for the generic filter (viewed as a pair of mutually-$Col(\omega,\alpha)$-generics), the left coordinate of the generic filter, and the right coordinate of the generic filter, respectively. Fix $G$ $\mathbb{P}$-generic over $L_{\alpha^\boxplus}$ (which exists since $\alpha$ and hence $L_{\alpha^\boxplus}$ is countable); we claim that the pair of copies of $\alpha$ corresponding to $G_0:=g_0[G]$ and $G_1:=g_1[G]$ --- which we will conflate with $G_0$ and $G_1$ themselves --- will have the desired property that $\mathcal{O}^{G_0}\oplus\mathcal{O}^{G_1}$ does not compute the isomorphism between $G_0$ and $G_1$. To prove this, we need to combine a complexity calculation and a counting argument. 

First, note that if $A$ is an admissible set\footnote{That is, a transitive set model of $\mathsf{KP}$; for our purposes, we only care about admissible levels of $L$ and their generic extensions.} and $X\in A$ is a linear order then $X$ is well-founded in reality iff $A$ satisfies the $\Sigma_1$ sentence ``There is an order-isomorphism between $X$ and an ordinal." 

Now we ``genericize" the previous paragraph. A condition $(p,q)$ forces that $n\in\mathcal{O}^{G_i}$ for $i\in\{0,1\}$ iff there is an extension $(p',q')$ and a name $\mu$ which $(p',q')$ forces to be an order-isomorphism between $\Phi_n^{G_i}$ and some ordinal, which is a $\Sigma_3(L_{\alpha^\boxplus})$ property. For brevity, going forwards we will simply write ``$\Sigma_3$."

Finally, we can apply a counting argument using $3$-potency of $\alpha$. Suppose towards contradiction that $\mathcal{O}^{G_0}\oplus\mathcal{O}^{G_1}$ does compute the isomorphism between $G_0$ and $G_1$. Let $(p,q)\in\mathbb{P}, e\in\omega$ be such that $$(p,q)\Vdash_\mathbb{P}\Phi_e^{\mathcal{O}^{g_0}\oplus\mathcal{O}^{g_1}}: g_0\cong g_1.$$ By extending and permuting (the latter at the cost of changing $e$) the two coordinates as necessary, we may without loss of generality assume $p=q$. Let $n=\vert p\vert$, and let $E=\alpha\setminus ran(p)$ be the set of ``unused" elements of $\alpha$; we still have $L_{\alpha^\boxplus}\models\vert E\vert=\omega_1$ since $p$ is finite.

Now by the argument two paragraphs prior, and by picking an appropriately-least witness, we have a $\Sigma_3$ map assigning to each $\eta\in E$ a pair of finite strings of natural numbers $(\sigma_\eta,\tau_\eta)$ such that for some condition $$(p_\eta,q_\eta)\le (p^\smallfrown\langle\eta\rangle, p^\smallfrown\langle\eta\rangle)$$ we have $(p',q')\Vdash \sigma_\eta\prec \mathcal{O}^{g_0}\wedge \tau_eta\prec\mathcal{O}^{g_1}$ and moreover $\Phi_e^{\sigma_\eta\oplus\tau_\eta}(n)\downarrow=n$ (that is, the putative isomorphism computed by $\Phi_e$ identifies the ``two $\eta$s"). By the above assumptions on $e$ and $p$, at least one such pair of strings must exist, so since $\alpha$ is $3$-potent this map is not injective.

Let $\zeta<\xi<\alpha$ be such that $\sigma_\zeta=\sigma_\xi$ and $\tau_\zeta=\tau_xi$, with corresponding conditions $(p_\zeta,q_\zeta)$ and $(p_\xi,q_\xi)$. The condition $(p_\zeta, q_\xi)$ then forces the putative isomorphism computed by $\Phi_e$ to identify $\zeta$ and $\xi$, a contradiction.
\end{proof}

Combining the theorem and lemma above, we get that ``most" countable ordinals are not $\mathcal{O}^\Box$-classifiable:

\begin{cor}\label{nonstat} The subset of $\mathbb{W}$ consisting of countable ordinals which are clarified by $\mathcal{O}^\Box$ is nonstationary.
\end{cor}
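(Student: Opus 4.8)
The plan is to simply combine the two preceding results. By the Lemma applied with $n=3$, fix a club $C\subseteq\omega_1$ consisting of countable $3$-potent ordinals. By Theorem~\ref{upperbound}, every $\alpha\in C$ fails to be $\mathcal{O}^\Box$-clarified. Hence the set $S$ of countable ordinals that \emph{are} clarified by $\mathcal{O}^\Box$ is disjoint from $C$, i.e. $S\subseteq\omega_1\setminus C$. Since $\omega_1\setminus C$ is the complement of a club, it is nonstationary, and therefore its subset $S$ is nonstationary as well.

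The only point requiring a word of care is the bookkeeping distinction between "$\alpha$ viewed as an ordinal in the club $C\subseteq\omega_1$" and "$\alpha$ viewed as an element of $\mathbb{W}$ (i.e. the order type, realized by various copies with domain $\omega$)"; clarification is a property of the order type, and $3$-potency is a property of the ordinal, so the identification is harmless. I would state this identification explicitly so that "the subset of $\mathbb{W}$ consisting of countable ordinals clarified by $\mathcal{O}^\Box$" is literally being compared against a subset of $\omega_1$.

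There is essentially no obstacle here: the corollary is a formal consequence of the Lemma and Theorem, and the proof is two sentences. If anything, the only mild subtlety worth flagging is that "nonstationary" for a subset of $\omega_1$ means contained in the complement of a club, and a subset of a nonstationary set is nonstationary — both of which are immediate from the definitions.
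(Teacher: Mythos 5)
Your proof is correct and is exactly the paper's intended argument: the paper states the corollary follows by "combining the theorem and lemma above," i.e.\ the club of $3$-potent ordinals avoids the clarified ordinals, so the latter set is nonstationary. The bookkeeping remark about identifying order types with ordinals in $\omega_1$ is reasonable but is silently assumed in the paper.
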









\section{Positive results nonetheless}

In this section we present results demonstrating the limits of the previous section. First, we examine the situation assuming $\mathsf{V=L}$, and show in particular that Corollary \ref{nonstat} above is best possible in $\mathsf{ZFC}$. Then we show in $\mathsf{ZFC}$ that the smallest non-$\mathcal{O}^\Box$-clarified ordinal is quite large. Since these results use different techniques, we treat them in separate subsections.

\subsection{Clarification below $\omega_1^L$}

First, we observe that ``nonstationary" cannot be replaced with ``countable" in Corollary \ref{nonstat} in $\mathsf{ZFC}$ alone:

\begin{thm}\label{unbound} The set of ordinals clarified by $\mathcal{O}^\Box$ is unbounded in $\omega_1^L$.
\end{thm}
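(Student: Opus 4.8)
The plan is to isolate a simple sufficient condition for $\mathcal{O}^\Box$-clarification which a cofinal subset of $\omega_1^L$ meets, and which is in effect dual to the $3$-potency condition of Theorem \ref{upperbound}. The condition is: $\beta$ is clarified by $\mathcal{O}^\Box$ whenever $L_{\beta^\boxplus}$ contains a copy of $\beta$ with domain $\omega$ (equivalently, $L_{\beta^\boxplus}$ satisfies ``$\beta$ is countable''). So first I would prove that implication, then prove it holds for cofinally many $\beta<\omega_1^L$, then combine.

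For the implication, fix such a copy $c\in L_{\beta^\boxplus}$ and let $\mathfrak{A}\cong\mathfrak{B}$ be arbitrary copies of $\beta$ with domain $\omega$. Since $\mathfrak{A}$ is an $\mathfrak{A}$-computable well-ordering of order type $\beta$, we have $\beta<\omega_1^\mathfrak{A}$; and $\omega_1^\mathfrak{A}$ is an admissible ordinal $>\beta$, so $\omega_1^\mathfrak{A}\ge\beta^\boxplus$ and hence $c\in L_{\beta^\boxplus}\subseteq L_{\omega_1^\mathfrak{A}}\subseteq L_{\omega_1^\mathfrak{A}}[\mathfrak{A}]$. By the relativized form of the theorem identifying the reals of the least admissible set over $\mathfrak{A}$ with the reals hyperarithmetic in $\mathfrak{A}$, $c$ is hyperarithmetic in $\mathfrak{A}$, hence $c\le_T\mathcal{O}^\mathfrak{A}$. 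The unique isomorphism $h_\mathfrak{A}\colon\mathfrak{A}\cong c$ is $\Delta^1_1$ in $\mathfrak{A}\oplus c$ --- it is the unique order-preserving bijection, and being that bijection is arithmetic, so its graph is simultaneously $\Sigma^1_1(\mathfrak{A}\oplus c)$ and $\Pi^1_1(\mathfrak{A}\oplus c)$ --- and since $c$ is hyperarithmetic in $\mathfrak{A}$ this yields $h_\mathfrak{A}\le_T\mathcal{O}^\mathfrak{A}$; symmetrically $h_\mathfrak{B}\le_T\mathcal{O}^\mathfrak{B}$. Then $h_\mathfrak{B}^{-1}\circ h_\mathfrak{A}$ is the isomorphism $\mathfrak{A}\cong\mathfrak{B}$ and is computable from $\mathcal{O}^\mathfrak{A}\oplus\mathcal{O}^\mathfrak{B}$. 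Crucially, no uniformity in the pair $\mathfrak{A},\mathfrak{B}$ is needed, since the definition of ``clarifies'' permits the reduction to depend on the two copies; this is exactly the slack that is missing from the ``least-index copy'' heuristic of the introduction once one passes above $\omega_1^{CK}$.

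Next one checks the condition holds cofinally in $\omega_1^L$. New constructible reals appear at cofinally many stages below $\omega_1^L$: since $L\models\mathsf{CH}$ we have $|\mathcal{P}(\omega)^L|=\aleph_1^L$, whereas $|L_\delta|=\aleph_0$ in $L$ for every $\delta<\omega_1^L$, so $\mathcal{P}(\omega)^L$ is not contained in any single $L_\delta$ with $\delta<\omega_1^L$; and a real can first appear only at a successor stage. Whenever some real lies in $L_{\beta+1}\setminus L_\beta$, the ($\Sigma_\omega$-)projectum of $\beta$ equals $\omega$, so by a standard fine-structural fact about $L$ there is a surjection from $\omega$ onto $L_\beta$ that is definable over $L_\beta$, hence lies in $L_{\beta+1}$; in particular $\beta$ is countable in $L_{\beta+1}$, hence in $L_{\beta^\boxplus}$, and since $L_{\beta^\boxplus}$ is admissible it then contains an actual bijection between $\omega$ and $\beta$, i.e.\ a copy of $\beta$ with domain $\omega$. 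So every such $\beta$ meets the hypothesis of the previous paragraph, and such $\beta$ are cofinal in $\omega_1^L$; this proves the theorem.

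The argument is mostly an assembly of standard facts, so there is no single difficult step; the points needing the most care are the fine-structural input in the last paragraph (that a new real at stage $\beta+1$ collapses $L_\beta$ to be countable there), and, in the clarification argument, the facts that $\omega_1^\mathfrak{A}\ge\beta^\boxplus$ for every copy $\mathfrak{A}$ of $\beta$ and that the reals of the least admissible set over $\mathfrak{A}$ are precisely those hyperarithmetic in $\mathfrak{A}$ --- all of which I would cite from Barwise \cite{Barw75}, Ash/Knight \cite{AsKn00}, and the standard theory of $L$ rather than reprove.
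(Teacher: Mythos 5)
Your proof is correct, and it takes a genuinely different route from the paper's. Both arguments identify essentially the same class of ``good'' ordinals --- those $\beta$ that are seen to be countable at some level of $L$ below $\beta^\boxplus$ --- but differ in both halves. For the cofinality half, the paper cites Hamkins--Linetsky--Reitz for the existence of cofinally many pointwise-definable $L_\alpha$ and uses the definability of the elementary diagram to get a real coding $\alpha$ into $L_{\alpha+1}$; you instead observe that new constructible reals appear cofinally below $\omega_1^L$ and invoke the (standard but nontrivial) fine-structural collapse of the projectum to $\omega$ at such stages. Your version gives the weaker-looking (hence more widely applicable) hypothesis ``$\beta$ countable in $L_{\beta^\boxplus}$,'' which is pleasingly dual to the $3$-potency condition of Theorem~\ref{upperbound}, at the price of leaning on projectum facts; the paper's version is more self-contained modulo the cited result. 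For the clarification half, the paper constructs the canonical $<_L$-least coding real $r(\mathfrak{X})$ by a model-theoretic uniqueness argument about well-founded $\mathsf{V=L}$ models with ordinal part $\alpha+\omega$ and then extracts $\Pi^1_1(\mathfrak{X})$ definitions of $n\in r$ and $n\notin r$ by quantifying over all such models; you instead fix a single copy $c\in L_{\beta^\boxplus}$, note $\omega_1^\mathfrak{A}\ge\beta^\boxplus$ (using that $\omega_1^\mathfrak{A}$ is admissible, not merely $\mathfrak{A}$-admissible), and run the reduction through hyperarithmetic theory: $c\le_h\mathfrak{A}$, the unique order-isomorphism $h_\mathfrak{A}:\mathfrak{A}\cong c$ is $\Delta^1_1(\mathfrak{A}\oplus c)$, and $\Delta^1_1$ is closed under $\Delta^1_1$ substitution. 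Both routes are sound; yours is closer in spirit to the computability-theoretic framework of the rest of the paper and makes the non-uniformity of the clarification definition explicit, while the paper's is more elementary in that it avoids hyperarithmetic closure facts (and fine structure) in favor of a single concrete model-theoretic construction.
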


Note that it is consistent with $\mathsf{ZFC}$ that $\omega_1=\omega_1^L$.

\begin{proof} It is known that there are unbounded-in-$\omega_1^L$-many ordinals $\alpha$ such that $L_{\alpha+\omega}$ (or even $L_{\alpha+1}$) sees that $\alpha$ is countable; for example, this is the case for any $\alpha$ such that $L_\alpha$ is a pointwise-definable model of a large enough fragment of set theory (see \cite{HLR13}). Se will show that every such $\alpha$ is clarified by $\mathcal{O}^\Box$.

Suppose $\alpha$ is such an ordinal. Given a copy $\mathfrak{X}$ of $\alpha$ with domain $\omega$, there is a unique-up-to-isomorphism countable structure $\mathcal{M}$ satisfying $\mathsf{Pairing}$, $\mathsf{Union}$, $\mathsf{Foundation}$, $\mathsf{Extensionality}$, and $\mathsf{V=L}$ such that $\mathit{Ord}^\mathcal{M}\cong\alpha+\omega$. Such an $\mathcal{M}$ is necessarily well-founded, so we can safely conflate $\mathbb{R}^\mathcal{M}$ with a set of reals. Under this conflation, let $r=r(\mathfrak{X})$ be the unique real which some(/every) such $\mathcal{M}$ thinks is ``the $<_L$-least real coding a copy of the largest limit ordinal" (note that in $\mathcal{M}$, the largest limit ordinal is $\alpha$).

We first claim that $r\le_T\mathcal{O}^\mathfrak{X}$. To see this, note that for $n\in\omega$ we have $n\in r$ iff for every countable $\mathcal{M}$ with the above properties, $n$ is in the real which $\mathcal{M}$ thinks is $r$; the predicate $n\not\in r$ has a similarly $\Pi^1_1(\mathfrak{X})$ definition, gotten by replacing ``for every" with ``for some" in the previous sentence, and so $r\le_T\mathcal{O}^\mathfrak{X}$ as desired.

Next, we claim that $\mathcal{O}^\mathfrak{X}$ also computes the unique isomorphism $i_\mathfrak{X}$ from $\mathfrak{X}$ to the well-ordering coded by $r$. Again, the point is that that isomorphism is an element of every $\mathcal{M}$ with the properties above, so $\mathcal{O}^\mathfrak{X}$ can determine the isomorphism's behavior on each $x\in\mathfrak{X}$.

Now suppose $\mathfrak{X},\mathfrak{Y}$ are copies with domain $\omega$ of the same countable ordinal $\alpha$. Note that $r(\mathfrak{X})=r(\mathfrak{Y})$, and so in particular $i_\mathfrak{X}\circ i_\mathfrak{Y}^{-1}$ is an isomorphism from $\mathfrak{X}$ to $\mathfrak{Y}$. But that isomorphism is clearly computable from the join of $i_\mathfrak{X}$ and $i_\mathfrak{Y}$, which in turn is computable from $\mathcal{O}^\mathfrak{X}\oplus\mathcal{O}^\mathfrak{Y}$ by the discussion above.
\end{proof}

Since it is consistent with $\mathsf{ZFC}$ that $\omega_1^L=\omega_1$, we immediately have the following:

\begin{cor}\label{unboundcor} It is consistent with $\mathsf{ZFC}$ that $\mathcal{O}^\Box$ clarifies an uncountable set of countable ordinals.
\end{cor}

A similar phenomenon occurs if we shift from $\mathcal{O}^\Box$ to more complicated functionals; in contrast with Theorem \ref{unbound}, we find it simpler to state this theorem assuming $\mathsf{V=L}$. Note that in what follows, we gauge the complexity of a functional as a set of ordered pairs of reals; so, for example, $\mathcal{O}^\Box$ is $\Pi^1_1\wedge\Sigma^1_1$, even though $\mathcal{O}^x$ as a set of natural numbers is uniformly $\Pi^1_1$ in $x$ for each real $x$. (This is similar to how e.g. ${\bf 0'''}$ is a $\Pi^0_2$ singleton but a $\Sigma^0_3$ set of naturals.)

\begin{thm}\label{constructible} Assume $\mathsf{V=L}$. Let $\mathcal{Q}^\Box$ be the  functional such that:\begin{itemize}

\item If $r$ does not code a well-ordering, then $\mathcal{Q}^r=0$.

\item If $r$ does code a well-ordering, then $\mathcal{Q}^r$ is the $L$-least real $s$ which codes some well-founded model of $\mathsf{KP+V=L}$ containing $r$ as an element and containing an isomorphism between (the structure coded by) $r$ and some ordinal.
\end{itemize}

Then $\mathcal{Q}^\Box$ is $\Sigma^1_2\wedge\Pi^1_2$ and clarifies every ordinal $<\omega_1^L$.
\end{thm}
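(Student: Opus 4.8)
The plan is to mirror the structure of the proof of Theorem~\ref{unbound}, but now using $\mathcal{Q}^\Box$ itself to do the work that pointwise-definability of $L_\alpha$ did before. Fix $\alpha<\omega_1^L$ and a copy $\mathfrak{X}$ of $\alpha$ with domain $\omega$; I want to extract from $\mathcal{Q}^\mathfrak{X}$ a canonical well-ordering $w_\mathfrak{X}$ isomorphic to $\alpha$ together with a canonical isomorphism $i_\mathfrak{X}:\mathfrak{X}\cong w_\mathfrak{X}$, in such a way that $w_\mathfrak{X}$ depends only on $\alpha$ and not on the particular copy $\mathfrak{X}$. Then, exactly as before, for two copies $\mathfrak{X},\mathfrak{Y}$ of $\alpha$ we get $w_\mathfrak{X}=w_\mathfrak{Y}$, so $i_\mathfrak{Y}^{-1}\circ i_\mathfrak{X}$ is an isomorphism $\mathfrak{X}\cong\mathfrak{Y}$ computable from $i_\mathfrak{X}\oplus i_\mathfrak{Y}$, hence from $\mathcal{Q}^\mathfrak{X}\oplus\mathcal{Q}^\mathfrak{Y}$.

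For the canonicity, first note that since $\mathfrak{X}$ codes a well-ordering, $\mathcal{Q}^\mathfrak{X}$ is the $L$-least real $s$ coding a well-founded model $\mathcal{N}$ of $\mathsf{KP+V=L}$ containing $\mathfrak{X}$ and an isomorphism of $\mathfrak{X}$ with some ordinal. Since $\mathcal{N}$ is well-founded and satisfies $\mathsf{V=L}$ it is isomorphic to some $L_\beta$, and it contains (the ordinal which is) $\alpha$; let $\mathcal{N}\models\textrm{``}v = $ the $<_L$-least real coding a well-ordering isomorphic to $\alpha$''. Set $w_\mathfrak{X}$ to be the real $v$ as decoded from $s=\mathcal{Q}^\mathfrak{X}$, and let $i_\mathfrak{X}$ be the isomorphism $\mathfrak{X}\cong w_\mathfrak{X}$ which $\mathcal{N}$ computes by composing its isomorphism $\mathfrak{X}\cong\alpha$ with the inverse of its isomorphism $w_\mathfrak{X}\cong\alpha$. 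The key point is that $w_\mathfrak{X}$ and $i_\mathfrak{X}$ are \emph{absolutely} defined: since $\mathcal{N}$ is well-founded and computes an ordinal isomorphic to $\mathfrak{X}$, and since ``$<_L$-least real coding a well-ordering of type $\alpha$'' is a $\Sigma_1$-over-$L$ notion that is correctly computed by any well-founded $L_\beta$ containing $\alpha$ and witnessing its countability, the real $w_\mathfrak{X}$ is genuinely the $<_L$-least real of order type $\alpha$ — in particular it does not depend on $\mathcal{N}$ or on $\mathfrak{X}$, only on $\alpha$. Here I use $\mathsf{V=L}$ to know such a $<_L$-least real exists and is what every such model names. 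All of $w_\mathfrak{X}$, $i_\mathfrak{X}$ are then computable from $\mathcal{Q}^\mathfrak{X}$ essentially by decoding.

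For the complexity claim, I gauge $\mathcal{Q}^\Box$ as a set of pairs $(r,s)$. The case $r\notin\mathbb{W}$, $s=0$ is $\Sigma^1_1$-in-the-pair (``$r$ is ill-founded and $s=0$''). The case $r\in\mathbb{W}$ requires saying ``$s$ codes a well-founded model of $\mathsf{KP+V=L}$ containing $r$ and an isomorphism of $r$ with an ordinal, and no $L$-smaller real does so''. Well-foundedness of the coded model is $\Pi^1_1$; ``contains $r$ and an isomorphism'' is arithmetic in $s,r$; and the $L$-leastness clause is a universal quantifier over reals $s'<_L s$ of a $\Sigma^1_1\vee(\textrm{arithmetic})$ matrix, which is $\Pi^1_2$. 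So membership is $\Pi^1_2$, and by the usual trick (a $\Sigma^1_2$ relation is a $\Pi^1_2$ singleton in the same way, via $\mathsf{V=L}$'s $\Sigma^1_2$ well-ordering of the reals) the graph of $\mathcal{Q}^\Box$ is $\Sigma^1_2\wedge\Pi^1_2$. I expect the main obstacle to be getting the absoluteness in the previous paragraph exactly right: one must check that \emph{every} well-founded model of $\mathsf{KP}+\mathsf{V=L}$ that contains $r$ and witnesses $r$'s countability agrees on the identity of the $<_L$-least real of order type $\alpha$, which needs the $\Sigma_1$-absoluteness of the relevant $L$-hierarchy statements between such a model and $L$ itself — routine given Barwise-style arguments, but it is the one place where carelessness would break the ``$w_\mathfrak{X}$ depends only on $\alpha$'' conclusion that the whole clarification argument rests on.
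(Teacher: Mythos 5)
Your proof is correct, and it is genuinely a different route from the paper's. For the clarification part, you go via the canonical object $w_\mathfrak{X}$, the $<_L$-least real coding a well-ordering of type $\alpha$: you observe that the model $\mathcal{N}\cong L_\beta$ coded by $\mathcal{Q}^\mathfrak{X}$ must contain $w_\mathfrak{X}$ and correctly identify it, so $w_\mathfrak{X}=w_\mathfrak{Y}$ and the rest mirrors Theorem~\ref{unbound}. The paper instead exploits $L$-level comparability \emph{directly}: writing $L_\beta, L_\gamma$ for the models coded by $\mathcal{Q}^\mathfrak{X},\mathcal{Q}^\mathfrak{Y}$ with isomorphisms $f,g$, it notes that WLOG $\beta\le\gamma$, so $\mathfrak{X},f\in L_\gamma$, and hence the composite $g^{-1}\circ f:\mathfrak{X}\cong\mathfrak{Y}$ already lives in $L_\gamma$ and is decodable. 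Both are valid, and they lean on the same underlying fact (transitive initial segments of $L$ are linearly ordered and compute $<_L$ correctly); yours is the more ``symmetric'' argument and directly recycles the Theorem~\ref{unbound} template, while the paper's is shorter and avoids needing to pin down the canonical real $w$. One thing you flag as the likely sticking point --- that every suitable well-founded model agrees on $w_\mathfrak{X}$ --- does deserve the explicit observation that $w_\mathfrak{X}\le_L \mathfrak{X}\in L_\beta$ forces $w_\mathfrak{X}\in L_\beta$, which is what makes the $\Sigma_1$-absoluteness you invoke actually bite; you gesture at this but don't nail it down.

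On the complexity computation: both you and the paper arrive at $\Sigma^1_2\wedge\Pi^1_2$, though the route in your write-up is a bit muddled. You correctly compute that the graph is $\Pi^1_2$ outright (well-foundedness is $\Pi^1_1$, and the $L$-minimality clause is $\forall s'\,(\Pi^1_2\vee\Sigma^1_1)$, hence $\Pi^1_2$), but then invoke a ``usual trick'' to get $\Sigma^1_2\wedge\Pi^1_2$ as though that were a strengthening --- it is the opposite: $\Pi^1_2$ is a special case of $\Sigma^1_2\wedge\Pi^1_2$, so no extra argument is needed, and your final sentence about $\Sigma^1_2$ well-orderings is not doing any work. The theorem's stated bound is simply not tight, and your calculation (up to that last sentence) shows why.
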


\begin{proof} First we argue that $\mathcal{Q}^\Box$ is $\Sigma^1_2\wedge\Pi^1_2$. Determining which bulletpoint holds of a given $r$ is a $\Pi^1_1$ question, so we may assume that $r$ does in fact code a well-ordering. The existence of a well-founded model of $\mathsf{KP+V=L}$ with the relevant properties is $\Sigma^1_2$, and the $L$-ordering is $\Delta^1_2$. So (assuming $r$ codes a well-ordering) we have $\mathcal{Q}^r=s$ iff a $\Sigma^1_2$ condition holds and for every $t$ a disjunction between a $\Delta^1_2$ condition and a $\Pi^1_2$ condition holds; and this simplifies to $\Sigma^1_2\wedge\Pi^1_2$.

Next, we show that $\mathcal{Q}^\Box$ does indeed clarify the $L$-countable ordinals. Let $\mathfrak{X},\mathfrak{Y}$ be copies of the same countable ordinal $\alpha$. Let $L_\beta, L_\gamma$ be the levels of $L$ coded by $\mathcal{Q}^\mathfrak{X},\mathcal{Q}^\mathfrak{Y}$ respectively, with $f\in L_\beta$ and $g\in L_\gamma$ being the corresponding isomorphisms-with-$\alpha$. The key point is that levels of the $L$-hierarchy are comparable. Without loss of generality, assume $\beta\le\gamma$. Then $\mathfrak{X},f\in L_\gamma$ and so in particular the real $s=\mathcal{Q}^\mathfrak{Y}$ coding $L_\gamma$ can compute $g^{-1}\circ f:\mathfrak{X}\cong\mathfrak{Y}$.
\end{proof}

\subsection{The smallest nonclarifiable ordinal is large}

\begin{defn} A transitive set $A=(A;\in)$ is \begin{itemize}

\item {\em pointwise-definable} iff for each $a\in A$ there is a parameter-free formula in the language of set theory $\varphi_a$ such that $\varphi_a^A=\{a\}$.

\item $\Sigma^1_1$-pointwise-definable iff for every $a\in L_\alpha$ there is a $\Sigma_1^1$ formula in the language of set theory $\varphi_a(x)\equiv\exists X\psi(X,x)$ such that $\varphi_a^{L_\alpha}=\{a\}$. (Note that the ``$\exists X$" is understood as a true second-order quantifier here, and in particular $X$ is not restricted to definable subsets of $L_\alpha$.)
\end{itemize}

An {\em ordinal} $\alpha$ is pd (resp. $\Sigma^1_1$-pd) iff $L_\alpha$ is pointwise-definable (resp. $\Sigma^1_1$-pointwise-definable).
\end{defn}

For more on pointwise-definability, see Hamkins/Linetsky/Reitz \cite{HLR13} (where this term was introduced).

In this subsection we prove:

\begin{thm}\label{lowerbound} If $\alpha$ is $\Sigma^1_1$-pd, then $\mathcal{O}^\Box$ clarifies $\{\mathfrak{A}: \mathfrak{A}\cong\alpha\}$.
\end{thm}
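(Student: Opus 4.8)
The plan is to mirror the argument of Theorem \ref{unbound}, but with $L_{\alpha+\omega}$ replaced by an appropriate admissible level and ordinary definability replaced by $\Sigma^1_1$-definability, so that the relevant objects remain computable from $\mathcal{O}^{\mathfrak{X}}$. Suppose $\alpha$ is $\Sigma^1_1$-pd, and let $\mathfrak{X},\mathfrak{Y}$ be copies of $\alpha$ with domain $\omega$. As in the earlier proof, the goal is to attach to each copy $\mathfrak{X}$ a canonical well-ordering $r(\mathfrak{X})$ together with the isomorphism $i_\mathfrak{X}:\mathfrak{X}\cong r(\mathfrak{X})$, in such a way that (i) $r$ depends only on the isomorphism type $\alpha$, so $r(\mathfrak{X})=r(\mathfrak{Y})$, and (ii) both $r(\mathfrak{X})$ and $i_\mathfrak{X}$ are computable from $\mathcal{O}^\mathfrak{X}$. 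Given this, $i_\mathfrak{X}^{-1}\circ i_\mathfrak{Y}$ is an isomorphism $\mathfrak{Y}\cong\mathfrak{X}$ computable from $\mathcal{O}^\mathfrak{X}\oplus\mathcal{O}^\mathfrak{Y}$, which is exactly clarification.

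The first step is to pin down the right canonical object. Since $\alpha$ is $\Sigma^1_1$-pd, every element of $L_\alpha$, and in particular $\alpha$ itself viewed inside $L_{\alpha+1}$ (or whatever least admissible set containing a copy of $\alpha$ one prefers to work with), has a $\Sigma^1_1$ definition over $L_\alpha$. I would then argue: working relative to $\mathfrak{X}$, the statement ``$L_\alpha$ is the transitive collapse of a structure satisfying [enough KP + V=L] whose ordinals have type given by $\mathfrak{X}$, and $\varphi$ is a $\Sigma^1_1$ formula defining the collapse of a fixed well-ordering $s$'' can be expressed in a way whose positive part is $\Pi^1_1(\mathfrak{X})$ (``every countable model with these properties thinks...'') and whose negative part is also $\Pi^1_1(\mathfrak{X})$ (``every... thinks not...''), exactly as in the proof of Theorem \ref{unbound}; the extra $\Sigma^1_1$ quantifiers coming from the $\Sigma^1_1$-definitions get absorbed because we are already quantifying over (codes for) countable models and the witnesses to the $\Sigma^1_1$ formulae can be taken inside those models once we know the models are well-founded. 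Concretely: let $r = r(\mathfrak{X})$ be the real coding the well-ordering $\alpha$ obtained by taking, among all $\Sigma^1_1$ formulae $\varphi(x)$ that define (over the well-founded model with ordinal-type $\mathfrak{X}$) an element which is a well-ordering isomorphic to the top ordinal, the one of least Gödel number, and reading off its code; then $r$ depends only on $\alpha$, and membership in $r$ is $\Pi^1_1(\mathfrak{X})$ by the usual "for all well-founded models" trick, hence $r \le_T \mathcal{O}^\mathfrak{X}$. The same trick shows the isomorphism $i_\mathfrak{X}:\mathfrak{X}\cong r$ is $\le_T \mathcal{O}^\mathfrak{X}$, since that isomorphism is an element of every such model and $\mathcal{O}^\mathfrak{X}$ can decide its value on each $n\in\omega$.

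The main obstacle I anticipate is making precise the claim that the $\Sigma^1_1$ second-order witnesses do not push the complexity of $r$ above $\Pi^1_1(\mathfrak{X})$. Naively, ``$\varphi_a(x) \equiv \exists X\,\psi(X,x)$ holds in $L_\alpha$'' asserts the existence of a genuine subset of $L_\alpha$, which looks $\Sigma^1_2$ once we are quantifying over models coding $L_\alpha$. The resolution should be that we do not need the witness $X$ to literally be a subset of the model --- by $\Sigma^1_1$-pd we know *some* witness exists in $V$, and since $L_\alpha$ is countable and the canonical real $r$ is being defined by a uniqueness condition, we can instead say ``$r$ is the real such that for every well-founded model $\mathcal{M}$ with ordinal-type coded by $\mathfrak{X}$ and every $\Sigma^1_1$ code, if that code picks out a unique well-ordering-of-top-type in $\mathcal{M}$ (a condition we can phrase about $\mathcal{M}$ using a second-order quantifier that, because $\mathcal{M}$ is countable and well-founded, collapses down) then $r$ equals its collapse.'' Getting the quantifier bookkeeping exactly right --- and double-checking that "$\mathcal{M}$ is well-founded" is itself only $\Pi^1_1$, which is where the admissibility/$\Sigma_1$-absoluteness point from the excerpt does the work --- is the delicate part; everything after ``$r\le_T\mathcal{O}^\mathfrak{X}$ and $i_\mathfrak{X}\le_T\mathcal{O}^\mathfrak{X}$'' is then identical to the end of the proof of Theorem \ref{unbound}.
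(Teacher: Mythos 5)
There is a genuine gap, and it is fatal to the overall strategy rather than just a bookkeeping issue. Your plan, copied from the proof of Theorem~\ref{unbound}, is to produce a single \emph{canonical} real $r = r(\mathfrak{X})$ coding $\alpha$, obtained by locating a $\Sigma^1_1$-definable \emph{element} of $L_\alpha$ (or ``the well-founded model with ordinal-type $\mathfrak{X}$'') which is a well-ordering isomorphic to its ``top ordinal.'' But $\alpha$ is not an element of $L_\alpha$, and for many $\Sigma^1_1$-pd $\alpha$ --- in particular for every admissible $\alpha$, of which there are plenty below Schlutzenberg's bound $(\omega_1)^{L_\theta}$, e.g. $\omega_1^{CK}$ --- the structure $L_\alpha$ contains \emph{no} real at all coding a well-ordering of type $\alpha$: any real in an admissible $L_\alpha$ that codes a well-ordering gets collapsed by $L_\alpha$ itself to an ordinal $<\alpha$. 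So the search ``among all $\Sigma^1_1$ formulae that define an element which is a well-ordering isomorphic to the top ordinal, take the one of least G\"odel number'' has nothing to find. You cannot fix this by passing to a larger model $L_\beta$ that does contain such a real, because the hypothesis in play is $\Sigma^1_1$-pointwise-definability of $L_\alpha$, not of $L_\beta$; it gives you no way to pin $\beta$ or a copy of $\alpha$ in $L_\beta$ down canonically.

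The actual proof in the paper does not go through a canonical copy at all --- that is the key structural difference. Instead, it attaches to each given copy $\mathfrak{X}$ an \emph{annotation}: an expansion $\widehat{\mathfrak{X}}$ of a copy of $L_\alpha$ on domain $\omega$ with, for each $\Sigma^1_1$ formula $\varphi$, a unary predicate $U_\varphi$ marking $\varphi^{\widehat{\mathfrak{X}}}$ and a nullary predicate $V_\varphi$ recording whether $|\varphi^{\widehat{\mathfrak{X}}}|=1$. These predicates are Boolean combinations of $\Pi^1_1(\mathfrak{X})$ facts, so the annotation is computable from $\mathcal{O}^\mathfrak{X}$ (this is where the complexity calculation lives, and it is genuinely $\Delta^1_1$ rather than $\Delta^1_2$, sidestepping the second worry you raised). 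Then, given annotations $\widehat{\mathfrak{A}},\widehat{\mathfrak{B}}$ of two copies, $\Sigma^1_1$-pd-ness guarantees that each $a\in\widehat{\mathfrak{A}}$ satisfies $U_\varphi(a)\wedge V_\varphi$ for some $\varphi$, and one sends $a$ to the unique $b\in\widehat{\mathfrak{B}}$ with $U_\varphi(b)$; restricting this element-by-element matching to the ordinal parts yields the desired isomorphism $\mathfrak{A}\cong\mathfrak{B}$. So the clarification is obtained by a \emph{local} matching of $\Sigma^1_1$-definable points of two arbitrary copies, never by manufacturing a distinguished copy; that distinguished copy is precisely what fails to exist in the admissible case, which is why the $\mathsf{V=L}$-style argument of Theorem~\ref{unbound} does not transfer.
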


Before proving the theorem, we will say a bit more about $\Sigma^1_1$-pd-ness. We begin by mentioning Farmer Schlutzenberg's result (see Schlutzenberg \cite{Schl21} or Schweber \cite{Schw??}) that the smallest non-$\Sigma^1_1$-pointwise-definable ordinal is precisely $(\omega_1)^{L_\theta}$, where $L_\theta$ is the smallest level of the $L$-hierarchy satisfying $\mathsf{KP}+$ ``$\omega_1$ exists." Note that this gives an easy proof that there are non-$\Sigma^1_1$-pd ordinals $<\omega_1^L$, and so Theorem \ref{lowerbound} does not imply Theorem \ref{unbound}. It is easy to see that this is much larger than more computability-theoretically-familiar ordinals such as $\beta_0$. 

It is also worth noting that the ``$\Sigma^1_1$" prefix is non-redundant:

\begin{prop} The least non-$\Sigma^1_1$-pd ordinal is strictly larger than the least non-pd ordinal.
\end{prop}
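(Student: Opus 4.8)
The plan is to reduce to a single statement and then attack it with a satisfaction‑relation trick. Since every parameter‑free first‑order formula is in particular a $\Sigma^1_1$ formula, pointwise definability implies $\Sigma^1_1$‑pointwise definability, so the class of non‑$\Sigma^1_1$‑pd ordinals is contained in the class of non‑pd ordinals; in particular the least non‑$\Sigma^1_1$‑pd ordinal is $\ge$ the least non‑pd ordinal $\alpha_0$. To make the inequality strict it therefore suffices to show that $L_{\alpha_0}$ is itself $\Sigma^1_1$‑pointwise definable: every proper initial segment of $L_{\alpha_0}$ is pointwise definable, hence $\Sigma^1_1$‑pd, so this forces the least non‑$\Sigma^1_1$‑pd ordinal strictly above $\alpha_0$.

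First I would record the relevant structure of $\alpha_0$. Every $L_\gamma$ with $\gamma<\alpha_0$ is pointwise definable; $\alpha_0$ is a limit ordinal (were $\alpha_0=\gamma+1$, the model $L_{\gamma+1}=\mathrm{Def}(L_\gamma)$ would be pointwise definable, since $L_\gamma$ is pointwise definable and definable in $L_{\gamma+1}$) and $L_{\alpha_0}$ is countable. A condensation argument shows the parameter‑free definable elements of $L_{\alpha_0}$ form a transitive set, necessarily an initial segment $L_{\alpha_1}$ with $\alpha_1<\alpha_0$ and $L_{\alpha_1}\prec L_{\alpha_0}$. The same condensation yields the key rigidity lemma: a well‑founded pointwise‑definable model of $V=L$ is determined up to equality by its complete first‑order theory (two such models with the same theory have isomorphic — hence, being transitive, equal — definable parts, which are the whole models), and likewise with one distinguished element adjoined; consequently $\gamma\mapsto\mathrm{Th}(L_\gamma)$ is injective on $[0,\alpha_0)$.

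Next I would build the $\Sigma^1_1$ definitions. The satisfaction relation $\mathrm{Sat}$ of $L_{\alpha_0}$ is the unique subset of (a standard coding of) $L_{\alpha_0}$ obeying Tarski's recursive clauses, so an assertion of the form ``$\exists X\,(X$ obeys Tarski's clauses $\wedge\,\dots)$'' is $\Sigma^1_1$ over $L_{\alpha_0}$ and amounts to a first‑order statement about $(L_{\alpha_0},\in,\mathrm{Sat})$. Using $\mathrm{Sat}$, one builds by recursion a strictly increasing sequence $s^\ast=\langle\beta_\xi:\xi<\eta\rangle$ of ordinals below $\alpha_0$, with $\beta_\xi$ the least ordinal outside the parameter‑free‑over‑$L_{\alpha_0}$ definable closure of $\{\beta_\zeta:\zeta<\xi\}$; being strictly increasing and bounded by $\alpha_0$, it terminates at some countable length $\eta$, and at that stage the definable closure of $\{\beta_\xi:\xi<\eta\}$ contains every ordinal $<\alpha_0$ and hence, every element of $L_{\alpha_0}$ being definable over $L_{\alpha_0}$ from ordinal parameters, equals $L_{\alpha_0}$. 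The sequence $s^\ast$ exists as a subset of $L_{\alpha_0}$ and is, as a single object, $\Sigma^1_1$‑definable over $L_{\alpha_0}$: existentially quantify $\mathrm{Sat}$ together with a sequence and assert that the sequence obeys the defining recursion relative to $\mathrm{Sat}$ and is maximal. For $a\in L_{\alpha_0}$, choose a parameter‑free $\phi$ and finitely many entries $\beta_{\zeta_1},\dots,\beta_{\zeta_k}$ of $s^\ast$ with $a=\{v:L_{\alpha_0}\models\phi(v,\beta_{\zeta_1},\dots,\beta_{\zeta_k})\}$; substituting these entries, recovered from the $\Sigma^1_1$‑definable $s^\ast$, yields a $\Sigma^1_1$ formula defining $\{a\}$.

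The hard part will be recovering the needed entries $\beta_{\zeta_i}$ parameter‑free, since the indices $\zeta_i$ need not themselves be definable; I would reduce this to showing that the expanded structure $(L_{\alpha_0},\in,s^\ast)$ is already pointwise definable, equivalently that every ordinal below the length $\eta$ is definable there, and this is the one place where the minimality of $\alpha_0$ among non‑pd ordinals is essential — $\alpha_0$ being only ``barely'' non‑pd (its definable closure already being the proper initial segment $L_{\alpha_1}$), adjoining the generating sequence genuinely exhausts the structure. I expect this final pointwise‑definability claim for the expanded structure to be the main obstacle; the remaining ingredients — coding finite tuples and satisfaction relations as subsets of $L_{\alpha_0}$, and working with transitive submodels carrying the genuine $\in$ so that well‑foundedness clauses are free and the complexity stays $\Sigma^1_1$ rather than climbing to $\Sigma^1_2$ — are routine.
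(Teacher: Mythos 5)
Your reduction — that it suffices to show $L_{\alpha_0}$ is $\Sigma^1_1$-pointwise-definable for $\alpha_0$ the least non-pd ordinal — is sound, and the preliminary observations (limit ordinal, rigidity of pointwise-definable transitive models of $\mathsf V=\mathsf L$ via the theory, $\Sigma^1_1$-quantifiability of the satisfaction relation) are all correct. But there is a genuine gap exactly where you flag it: you never establish that the expanded structure $(L_{\alpha_0},\in,s^\ast)$ (equivalently $(L_{\alpha_0},\in,\mathrm{Sat},s^\ast)$) is pointwise-definable, and the remark that $\alpha_0$ is ``only barely non-pd'' is a heuristic, not an argument. The difficulty is real: the indices $\xi<\eta$ of your generating sequence are themselves ordinals of $L_{\alpha_0}$, each of which is by construction definable from $s^\ast$ only via further entries of $s^\ast$, so you are running in a circle; and the standard escape (pass to the definable hull of the expanded structure and argue it must be everything) founders on the facts that this hull need not be transitive and that Tarski's theorem blocks the naive ``least undefinable'' move inside the expanded structure. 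Nothing in what you wrote rules out, say, $\eta$ being an ordinal that $L_{\alpha_0}$ thinks is uncountable, in which case the definable hull of the expanded structure obviously misses most of $\eta$. So as written the proof is incomplete at its crux.

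It is also worth noting that your route is entirely different from the paper's, which goes top-down rather than bottom-up: the paper invokes Schlutzenberg's characterization of the least non-$\Sigma^1_1$-pd ordinal as $\omega_1^{L_\theta}$ (where $L_\theta$ is the least level satisfying $\mathsf{KP}+$``$\omega_1$ exists''), observes by a counting argument inside $L_\theta$ that $L_{\omega_1^{L_\theta}}$ has an undefinable element, and then applies downward L\"owenheim--Skolem and condensation inside $L_\theta$ to produce a non-pd $L_\gamma$ with $\gamma<\omega_1^{L_\theta}$. That argument never needs to produce an explicit $\Sigma^1_1$ definition of anything, which is precisely the step your approach gets stuck on. If you want to salvage a direct proof, the missing lemma you need is a positive $\Sigma^1_1$-pointwise-definability criterion (something in the spirit of Schlutzenberg's result), and at that point you may as well cite it.
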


\begin{proof} We use Schlutzenberg's characterization above for simplicity. Since the elementary diagram of $L_{\omega_1^{L_\theta}}$ is an element of $L_\theta$, by a counting argument in $L_\theta$ we have that $L_{\omega_1^{L_\theta}}$ contains an undefinable element $c$. Again inside $L_\theta$, apply downward Lowenheim-Skolem to get a countable elementary submodel of $L_{\omega_1^{L_\theta}}$ containing $c$ as an element and apply the Mostowski collapse; by condensation and (internal) countability the result is some $L_\gamma$ for $\gamma<\omega_1^{L_\theta}$, and by elementarity that $L_\gamma$ must not be pointwise-definable in the first-order sense.
\end{proof}

The above shows that the upper bound provided in the previous section is perhaps not as silly as it may appear.

\begin{proof}[Proof of Theorem \ref{lowerbound}] Given a well-ordering $\mathfrak{X}$ with domain $\omega$ which is isomorphic to an ordinal $\alpha$, let a {\em annotation} of $\mathfrak{X}$ be any structure  $\widehat{\mathfrak{X}}$ which is an expansion of $L_\alpha$ with the following properties:\begin{itemize}

\item The underlying set of $\widehat{\mathfrak{X}}$ is $\omega$.

\item $\mathit{Ord}^{\widehat{\mathfrak{X}}}$ is the set $\{2n:n\in\omega\}$ of even numbers, and $n\mapsto 2n$ gives an isomorphism from $\mathfrak{X}$ to $(\mathit{Ord}^{\widehat{\mathfrak{X}}};\in^{\widehat{\mathfrak{X}}}\upharpoonright \mathit{Ord}^{\widehat{\mathfrak{X}}})$.

\item The additional structure on $\widehat{\mathfrak{X}}$ consists of $(i)$ for each $\Sigma^1_1$ formula $\varphi(x)\equiv\exists Y\psi(x,Y)$ in the language of set theory, a unary predicate $U_\varphi$ picking out exactly $\varphi^{\widehat{\mathfrak{X}}}$ and $(ii)$ a nullary predicate $V_\varphi$ which evaluates to $\top$ iff $\vert\varphi^{\widehat{\mathfrak{X}}}\vert=1$.
\end{itemize}

(Note that while many annotations of a given $\mathfrak{X}$ exist, they are all isomorphic.) Crucially, $\mathcal{O}^\mathfrak{X}$ can compute an annotation of $\mathfrak{X}$, since $\mathcal{O}^\mathfrak{X}$ is $\Pi^1_1(\mathfrak{X})$-complete: we can pass from $\mathfrak{X}$ to a copy of $L_\alpha$ in a $\Delta^1_1$ (in $\mathfrak{X}$) way, and the further structure is all determined by Boolean combinations of $\Pi^1_1$ questions about that copy of $L_\alpha$.

Now let $\alpha$ be $\Sigma^1_1$-pointwise-definable and let $\mathfrak{A}\cong\mathfrak{B}$ be copies of $\alpha$. Suppose first that we are also provided with annotations $\widehat{\mathfrak{A}}$ and $\widehat{\mathfrak{B}}$ respectively. Since $\alpha$ is $\Sigma^1_1$-pointwise-definable, an isomorphism $\widehat{\mathfrak{A}}\cong\widehat{\mathfrak{B}}$ can be computed from the join of (the atomic diagrams of) $\widehat{\mathfrak{A}}$ and $\widehat{\mathfrak{B}}$ as follows: in order to tell where to map $a\in\widehat{\mathfrak{A}}$, we first look for a $\varphi$ such that $\mathfrak{A}\models U_\varphi(a)\wedge V_\varphi$ and then map this $a$ to the unique $b\in\mathfrak{B}$ such that $\mathfrak{B}\models U_\varphi(b)$. This isomorphism restricts to an isomorphism between their ordinal parts, which in turn yields one between the original $\mathfrak{A}$ and $\mathfrak{B}$ themselves. Concretely, given $j:\widehat{\mathfrak{A}}\cong\widehat{\mathfrak{B}}$, we let $i:\mathfrak{A}\cong\mathfrak{B}: a\mapsto {j(2a)\over 2}$.
\end{proof}








\section{Large cardinals and projective functionals}

In this section we show that the clarification results of Section 3.1 above were ``set-theoretically fragile:" if we assume large cardinal hypotheses in place of $\mathsf{V=L}$, we get a radically different picture. For simplicity we focus on functionals which are in $L(\mathbb{R})$ (note that every projective functional is in $L(\mathbb{R})$), but by modifying the relevant large cardinal hypothesis we can achieve analogous results for wider or narrower classes of functionals.

\bigskip

We begin with a lemma which does not need large cardinals at all, and is essentially the ``de-$L$-ification" of Theorem \ref{upperbound} above.

\begin{lem}\label{countZFC} Let $\mathcal{Q}$ be a projective functional. Suppose $\alpha\ge (2^{\aleph_0})^+$ and $\mathbb{P}$ is a (set) forcing such that there are $\mathbb{P}$-names $\mu,\nu$ which are forced by $\mathbb{P}$ to yield a pair of $\omega$-copies of $\alpha$ which are mutually $Col(\omega,\alpha)$-generic over the ground model $V$.

Then if $G$ is $\mathbb{P}$-generic over $V$, the intermediate model $V[\mu[G],\nu[G]]$ satisfies $\mathsf{ZFC}$ + ``$\mathcal{Q}$ does not clarify $\alpha$."
\end{lem}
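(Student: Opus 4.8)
The plan is to mimic the proof of Theorem~\ref{upperbound}, but with $L_{\alpha^\boxplus}$ replaced by $V$ itself and the $\Sigma_3$-complexity calculation replaced by the projective complexity of $\mathcal{Q}$, using the hypothesis $\alpha\ge(2^{\aleph_0})^+$ in place of $3$-potency. First I would verify the easy half: the intermediate model $N:=V[\mu[G],\nu[G]]$ satisfies $\mathsf{ZFC}$. This is standard — since $\mu,\nu$ are $\mathbb{P}$-names, $N$ is a set-generic-like submodel of $V[G]$ (indeed, because $\mu[G],\nu[G]$ are mutually $Col(\omega,\alpha)$-generic over $V$, we have $N=V[\mu[G]][\nu[G]]$ is an honest two-step forcing extension of $V$ by $Col(\omega,\alpha)\times Col(\omega,\alpha)$, or at least an intermediate model of such), so $\mathsf{ZFC}$ is preserved. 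I would note that in $N$, $\alpha$ is countable, so the two copies $A_0:=\mu[G]$, $A_1:=\nu[G]$ of $\alpha$ are genuinely countable structures with domain $\omega$ there, and that projective statements are absolute enough between $N$ and its forcing extensions for the argument below to make sense inside $N$.

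Next I would carry out the core non-clarification argument, working inside $N$ (equivalently: observing that the relevant forcing relation over $V$ decides everything). Suppose toward a contradiction that in $N$ there is $e\in\omega$ with $\Phi_e^{\mathcal{Q}^{A_0}\oplus\mathcal{Q}^{A_1}}$ computing the isomorphism $A_0\cong A_1$. Pull this back: there is a condition in $Col(\omega,\alpha)\times Col(\omega,\alpha)$ forcing $\Phi_e^{\mathcal{Q}^{\dot g_0}\oplus\mathcal{Q}^{\dot g_1}}:\dot g_0\cong\dot g_1$, and as in Theorem~\ref{upperbound} we may by extending and permuting assume the two coordinates of this condition are equal to a common $p$ with $n:=|p|$, and set $E=\alpha\setminus\mathrm{ran}(p)$, so $|E|=(2^{\aleph_0})^+$ (or just $\ge(2^{\aleph_0})^+$) since $p$ is finite. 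For each $\eta\in E$, pick a witnessing extension below $(p^\smallfrown\langle\eta\rangle,p^\smallfrown\langle\eta\rangle)$ forcing finite initial segments $\sigma_\eta\prec\mathcal{Q}^{\dot g_0}$, $\tau_\eta\prec\mathcal{Q}^{\dot g_1}$ long enough that $\Phi_e^{\sigma_\eta\oplus\tau_\eta}(n)\downarrow=n$; such a witness exists by the forcing assumption. The map $\eta\mapsto(\sigma_\eta,\tau_\eta)$ has range contained in the countable set of pairs of finite strings, while $|E|>2^{\aleph_0}\ge\aleph_0$, so it is non-injective: there are $\zeta<\xi$ in $E$ with $(\sigma_\zeta,\tau_\zeta)=(\sigma_\xi,\tau_\xi)$. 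Then the mixed condition (left coordinate from the $\zeta$-witness, right coordinate from the $\xi$-witness) is genuinely a condition (its coordinates are compatible below $p$ in the respective factors), and it forces $\Phi_e$ to map $n\mapsto n$, i.e.\ to send $\zeta$ (the $n$-th element enumerated on the left) to $\xi$ (the $n$-th element enumerated on the right) — but $\zeta\ne\xi$ as ordinals, contradicting that $\Phi_e$ computes an isomorphism of the two copies of $\alpha$. Hence no such $e$ exists and $\mathcal{Q}$ does not clarify $\alpha$ in $N$.

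The one point requiring care — and what I expect to be the main obstacle — is justifying that ``$n\in\mathcal{Q}^{\dot g_i}$'' (or more precisely, membership of a given number in a given finite-initial-segment-extendibility sense) is decided by the forcing in a way that makes the witness-selection uniform, so that the counting argument actually applies. In Theorem~\ref{upperbound} this was the $\Sigma_3(L_{\alpha^\boxplus})$ calculation plus picking an appropriately-least witness; here the analogue is that since $\mathcal{Q}$ is projective, the statement ``some extension forces $\sigma\prec\mathcal{Q}^{\dot g_0}\wedge\tau\prec\mathcal{Q}^{\dot g_1}$ and $\Phi_e^{\sigma\oplus\tau}(n)\downarrow=n$'' is a statement about $\alpha$, $\mathbb{P}$ and a real parameter that $V$ can evaluate, and for each $\eta$ we simply need \emph{some} witness, not a definable-least one — so the assignment $\eta\mapsto(\sigma_\eta,\tau_\eta)$ need only exist as a function in $V$ (via $\mathsf{AC}$ in $V$), not be definable. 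The essential combinatorial content is just finite pigeonhole against the cardinality $(2^{\aleph_0})^+>\aleph_0$ of $E$, together with the crucial structural fact (inherited from the $Col(\omega,\alpha)\times Col(\omega,\alpha)$ product structure) that a condition built from the left coordinate of one witness and the right coordinate of another is still a legitimate condition. I would spell out that last splicing step carefully, since it is where the product (as opposed to iteration) of collapses is used, exactly as in the proof of Theorem~\ref{upperbound}.
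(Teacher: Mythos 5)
Your overall approach matches the paper's: the paper's proof of Lemma~\ref{countZFC} is a one-sentence pointer back to the counting argument of Theorem~\ref{upperbound}, and you have correctly unwound that reference. The identification of $N=V[\mu[G],\nu[G]]$ as an extension of $V$ by $Col(\omega,\alpha)\times Col(\omega,\alpha)$ (via mutual genericity), hence a $\mathsf{ZFC}$ model, is right, and the choice of witnesses by $\mathsf{AC}$ in $V$ rather than by a definable least witness is exactly the intended ``no need to pay attention to quantifier complexity.'' Your pigeonhole observation that $|E|>\aleph_0$ already suffices to make $\eta\mapsto(\sigma_\eta,\tau_\eta)$ non-injective is also correct; the hypothesis $\alpha\ge(2^{\aleph_0})^+$ is not actually needed at this step.

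However, you have located the subtlety of the splicing step in the wrong place. That $(p_\zeta,q_\xi)$ is a legitimate condition extending $(p,p)$ is immediate from the definition of a product and requires no care. What genuinely needs an argument is the claim that $(p_\zeta,q_\xi)$ \emph{forces} $\sigma_\zeta\prec\mathcal{Q}^{\dot g_0}\wedge\tau_\xi\prec\mathcal{Q}^{\dot g_1}$. You only know $(p_\zeta,q_\zeta)\Vdash\sigma_\zeta\prec\mathcal{Q}^{\dot g_0}$, and $(p_\zeta,q_\xi)$ is not below $(p_\zeta,q_\zeta)$, so this does not transfer for free. Moreover, since $\mathcal{Q}$ is an arbitrary projective functional, $\mathcal{Q}^{g_0}$ is to be evaluated in the full extension $V[G_0][G_1]$, and projective truth is not in general absolute between $V[G_0]$ and $V[G_0][G_1]$ — so one cannot simply say ``$\mathcal{Q}^{g_0}$ depends only on $g_0$'' the way one can for $\mathcal{O}^{g_0}$ (where Mostowski absoluteness between admissible sets makes $\mathcal{O}^{g_0}$ genuinely a function of $g_0$ alone). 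The missing ingredient is weak homogeneity of $Col(\omega,\alpha)$: for a fixed left generic $G_0$, the formula ``$\sigma\prec\mathcal{Q}^{g_0}$'' has only parameters from $V[G_0]$, so its truth in $V[G_0][G_1]$ is independent of the right generic $G_1$; equivalently, whether $(p,q)\Vdash\sigma\prec\mathcal{Q}^{\dot g_0}$ depends only on $p$, and symmetrically for $\mathcal{Q}^{\dot g_1}$ and $q$. With that in hand, $(p_\zeta,q_\xi)$ does force both oracle approximations, $\Phi_e^{\sigma_\zeta\oplus\tau_\xi}(n)\downarrow=n$ gives the contradiction, and the argument closes. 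You should replace the sentence about coordinate compatibility with this homogeneity argument; without it, the proof has a real gap.
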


\begin{proof} This is the same counting argument as in the proof of Theorem \ref{upperbound}, with $\mathcal{Q}$ replacing $\mathcal{O}^\Box$ and with no need to pay attention to quantifier complexity.
\end{proof}

Next, we recall three useful results on forcing and large cardinals. The first is Grigorieff's intermediate model theorem:

\begin{fct}[Grigorieff \cite{Grig75}]\label{Grigorieff} If $M\subseteq N\subseteq M'$ are models of $\mathsf{ZFC}$ and $M'$ is a (set) generic extension of $M$, then $M'$ is also a generic extension of $N$.
\end{fct}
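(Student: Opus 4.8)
This is Grigorieff's intermediate-model theorem, and the plan is to reconstruct its standard Boolean-algebra proof. First I would replace $\mathbb{P}$ by its regular-open completion $\mathbb{B}=\mathrm{RO}(\mathbb{P})\in M$, so that $M'=M[G]$ for some $M$-generic ultrafilter $G$ on the complete (in $M$) Boolean algebra $\mathbb{B}$; set $\lambda=|\mathbb{B}|^{M}$, so that $\mathbb{B}$ is $\lambda^{+}$-c.c. in $M$. Note that $N$ automatically has the same ordinals as $M$ and $M'$, since $M\subseteq N\subseteq M[G]$ and forcing adds no ordinals. The whole statement then reduces to producing a complete subalgebra $\mathbb{D}\in M$ of $\mathbb{B}$ with $N=M[G\cap\mathbb{D}]$, from which the conclusion follows by the standard two-step factorization of Boolean forcing (carried out in the last paragraph).

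The main step --- and the one I expect to be the principal obstacle --- is to show that $N=M[A]$ for a single set of ordinals $A$. Here one uses crucially that $\mathbb{P}$ is a \emph{set}: every element of $N$ has a $\mathbb{B}$-name in $M$, and since $N\models\mathsf{ZFC}$ every $x\in N$ is coded in $N$ by a set of ordinals, so it suffices to control how sets of ordinals $b\in N$ are added over $M$. Passing to nice $\mathbb{B}$-names for such $b$ and using the $\lambda^{+}$-c.c. to bound how much of $G$ each name depends on, a bookkeeping argument --- this is the technical heart of Grigorieff's proof, which I would cite rather than reprove --- produces a single $A\in N$ from which, together with $M$, one recovers all of $N$.

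Granting $N=M[A]$ with $A\subseteq\gamma$, I would fix a suitably reduced $\mathbb{B}$-name $\dot A\in M$ for $A$ and let $\mathbb{D}$ be the complete subalgebra of $\mathbb{B}$ generated in $M$ by $\{\,\|\check\xi\in\dot A\|:\xi<\gamma\,\}$; then $\mathbb{D}\in M$ and $G\cap\mathbb{D}$ is $\mathbb{D}$-generic over $M$, by standard facts about complete subalgebras. The remaining point is $N=M[G\cap\mathbb{D}]$: for $N\subseteq M[G\cap\mathbb{D}]$ note that $A=\{\xi<\gamma:\|\check\xi\in\dot A\|\in G\cap\mathbb{D}\}$ is computed in $M[G\cap\mathbb{D}]$ from $G\cap\mathbb{D}$ and $\dot A\in M$, hence $M[A]\subseteq M[G\cap\mathbb{D}]$; and for the reverse inclusion one checks, by a density argument using that $\mathbb{D}$ is generated by the displayed set, that $G\cap\mathbb{D}$ is computable inside $N=M[A]$ from $A$ and data already lying in $M$, so that $G\cap\mathbb{D}\in N$.

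Finally I would invoke the two-step factorization: for a complete subalgebra $\mathbb{D}\in M$ of $\mathbb{B}$ and $H=G\cap\mathbb{D}$, the quotient $\mathbb{B}/H$ is a complete Boolean algebra in $M[H]$, the canonical image $\bar G$ of $G$ in $\mathbb{B}/H$ is $(\mathbb{B}/H)$-generic over $M[H]$, and $M[H][\bar G]=M[G]$. Taking $M[H]=N$, this exhibits $M'=M[G]=N[\bar G]$ as a generic extension of $N$, as required. Since the only genuinely hard ingredient is the reduction $N=M[A]$, which is precisely Grigorieff's theorem, in the paper we simply cite it.
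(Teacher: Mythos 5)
The paper does not prove this statement at all---it is presented as a labeled \texttt{fct} with a bare citation to Grigorieff \cite{Grig75}, and is used as a black box in the proof of Theorem \ref{noproj}. So there is no internal proof to compare against, and your decision to ultimately cite Grigorieff for the hard step is exactly what the paper does (just with less scaffolding). Your outline of the standard Boolean-algebra argument is accurate: pass to $\mathbb{B}=\mathrm{RO}(\mathbb{P})$, reduce to the key lemma that every intermediate $\mathsf{ZFC}$ model $N$ with $M\subseteq N\subseteq M[G]$ has the form $M[A]$ for a single set of ordinals $A\in N$, take $\mathbb{D}$ to be the complete subalgebra of $\mathbb{B}$ generated in $M$ by the Boolean values $\|\check\xi\in\dot A\|$, verify $N=M[G\cap\mathbb{D}]$, and conclude via the two-step factorization $M[G]=M[G\cap\mathbb{D}][\bar G]$. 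One small caveat: calling the $N=M[A]$ reduction ``precisely Grigorieff's theorem'' slightly overstates it---that reduction is the crucial lemma, while the theorem as stated also needs the complete-subalgebra and quotient steps you then supply---but since you do supply them, the account is complete modulo the cited lemma.
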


The second useful result is that ``small" forcing does not destroy large cardinals. This was first proved for measurable cardinals by Levy-Solovay, but the same proof works for Woodin cardinals:\footnote{There is some care needed here. Levy-Solovay also proved that ``small" forcing does not {\bf create} large cardinals either, which is harder to prove. In particular, while non-creation also holds for Woodin cardinals, it requires a different proof; see Hamkins/Woodin \cite{HaWo00}. Here we will only need the much simpler ``non-destruction" part of the theorem, and this much seems fair to attribute to the original Levy-Solovay paper.}

\begin{fct}[Levy-Solovay \cite{LeSo67}]\label{indest} If $V\models$ ``There is a proper class of Woodin cardinals," then every set-generic extension of $V$ satisfies that as well.
\end{fct}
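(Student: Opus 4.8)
The plan is to run the classical Levy--Solovay ``lifting'' argument, which goes through for Woodinness essentially because Woodinness is witnessed by extender embeddings whose critical points can be taken arbitrarily large below the cardinal in question. Fix a set forcing $\mathbb{Q}$, a $V$-generic $H\subseteq\mathbb{Q}$, and let $\delta=|\mathbb{Q}|$. Since $V$ has a proper class of Woodin cardinals, there is a proper class of Woodin cardinals $\kappa$ of $V$ with $\delta<\kappa$, and since forcing adds no ordinals this same class is a proper class in $V[H]$; so it suffices to show that each such $\kappa$ remains Woodin in $V[H]$.

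Fix such a $\kappa$ and recall the extender characterization: $\kappa$ is Woodin iff for every $A\subseteq V_\kappa$ the set of $\lambda<\kappa$ that are $<\!\kappa$-$A$-strong --- meaning that for every $\gamma<\kappa$ there is an elementary $j:V\to M$ with $\mathrm{crit}(j)=\lambda$, $j(\lambda)>\gamma$, $V_\gamma\subseteq M$, and $j(A)\cap V_\gamma=A\cap V_\gamma$ --- is stationary in $\kappa$. Working in $V[H]$, fix $A\subseteq V_\kappa^{V[H]}$ with $A\in V[H]$. Because $\mathbb{Q}$ has rank below $\kappa$, one checks that $A$ has a $\mathbb{Q}$-name of rank $<\kappa$, and so $A$ is coded by a single set $\bar A\subseteq V_\kappa^V$ lying in $V$, via a coding that is fixed by any embedding with critical point above $\delta$.

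Next I would apply Woodinness of $\kappa$ \emph{in $V$} to $\bar A$: the set of $<\!\kappa$-$\bar A$-strong $\lambda<\kappa$ is stationary, hence unbounded, so choose such a $\lambda$ with $\delta<\lambda$. For each $\gamma<\kappa$ of limit rank above $\mathrm{rank}(\mathbb{Q})$ --- it suffices to handle cofinally many $\gamma$ --- take $j:V\to M$ in $V$ witnessing $<\!\kappa$-$\bar A$-strength at $\gamma$. Since $\delta<\lambda=\mathrm{crit}(j)$ we have $\mathbb{Q}\in V_\lambda\subseteq M$ and $j(\mathbb{Q})=\mathbb{Q}$, so the standard lifting lemma yields an elementary $j:V[H]\to M[H]$ extending the original $j$ (no new genericity is needed, as $j``H=H$). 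I then verify that the lift still witnesses $<\!\kappa$-$A$-strength in $V[H]$: $V_\gamma^{V[H]}\subseteq M[H]$ follows from $V_\gamma^{V[H]}=(V_\gamma^V)[H]$ and $V_\gamma^V\subseteq M$; and $j(A)\cap V_\gamma^{V[H]}=A\cap V_\gamma^{V[H]}$ follows from $j(\bar A)\cap V_\gamma=\bar A\cap V_\gamma$ together with the fact that $j$ fixes the rank-$<\kappa$ name, so $A$ and $j(A)$ are computed from $\bar A$ and $j(\bar A)$ by the same definition over $(V_\gamma^V)[H]$. Thus $\lambda$ is $<\!\kappa$-$A$-strong in $V[H]$; as $A$ was arbitrary, $\kappa$ is Woodin in $V[H]$.

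The only genuinely non-routine point --- and the place where smallness of $\mathbb{Q}$ relative to $\kappa$ is really used --- is the reduction of names for bounded subsets of $V_\kappa$ to names of bounded rank, and the bookkeeping showing that $j$ acts on such a name in the expected way so that the agreement $j(\bar A)\cap V_\gamma=\bar A\cap V_\gamma$ transfers to $A$. Everything else is the standard lift-and-verify pattern.
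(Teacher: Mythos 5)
The paper states this result as a cited \emph{Fact} with no proof; the accompanying footnote merely explains that only the ``non-destruction'' direction of Levy--Solovay is needed here, and that the substantially harder ``non-creation'' companion (Hamkins--Woodin) is not. So there is no paper proof to compare against; your task was effectively to supply a correct justification, and you have done so.

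Your argument is the standard Levy--Solovay lift-and-verify proof adapted to the $<\!\kappa$-$A$-strong characterization of Woodinness, and its structure is sound: restrict attention to Woodin $\kappa>|\mathbb{Q}|$, reduce a new $A\subseteq V_\kappa^{V[H]}$ to a ground-model object $\bar A\subseteq V_\kappa^V$, apply Woodinness in $V$ to $\bar A$ to find a $<\!\kappa$-$\bar A$-strong $\lambda>|\mathbb{Q}|$, lift each witnessing $j$ to $V[H]$ (trivial since $j\upharpoonright\mathbb{Q}=\mathrm{id}$ so $j``H=H$), and check agreement on $V_\gamma$ transfers. One imprecision worth flagging: you say $A$ ``has a $\mathbb{Q}$-name of rank $<\kappa$,'' but $A$ is a subset of $V_\kappa^{V[H]}$ and hence typically has rank exactly $\kappa$; a nice name for it will likewise have rank $\kappa$, not $<\!\kappa$. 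The correct statement, and the one your argument actually uses, is that such a name can be taken to be a \emph{subset} of $V_\kappa^V$ (using inaccessibility of $\kappa$ and $|\mathbb{Q}|<\kappa$), so that Woodinness in $V$ applies to $\bar A$ and, for limit $\gamma$ above $\mathrm{rank}(\mathbb{Q})$, the local piece $A\cap V_\gamma^{V[H]}$ is determined by $\bar A\cap V_\gamma^V$ and $H$. With that rephrasing, the agreement calculation $j(A)\cap V_\gamma^{V[H]}=A\cap V_\gamma^{V[H]}$ goes through exactly as you describe, and the proof is correct.
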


Finally, we will need Woodin's generic absoluteness result:

\begin{fct}[Woodin]\label{abso} Suppose there is a proper class of Woodin cardinals. If $V[G]$ is a set-generic extension of $V$, $L(\mathbb{R})^V\equiv L(\mathbb{R})^{V[G]}$ and so in particular every projective sentence with real parameters from $V$ is absolute between $V$ and $V[G]$.
\end{fct}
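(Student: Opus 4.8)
The plan is not to reprove this celebrated theorem of Woodin from scratch --- that would require the full apparatus of homogeneously Suslin representations and the derived model theorem --- but rather to isolate the one deep input and then assemble the conclusion from it by routine means. That input is: \emph{under a proper class of Woodin cardinals, every set of reals belonging to $L(\mathbb{R})$ is universally Baire}, indeed $\kappa$-universally Baire for every cardinal $\kappa$, since there are Woodins cofinally in the ordinals. This is the step I expect to be --- and it genuinely is --- the main obstacle, and I would cite it (for instance from Steel's survey on the derived model theorem, or from Larson's monograph on the stationary tower forcing) rather than attempt anything self-contained.

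Granting that input, the first move is to recall the transfer behaviour of a $\kappa$-universally Baire set $A$: if $T,S$ are trees witnessing $\kappa$-universal Baireness, so that $p[T]=A$, $p[S]=\mathbb{R}^V\setminus A$, and $p[T],p[S]$ remain complementary projections in any forcing extension by a poset of size $<\kappa$, then in such an extension $V[G]$ the set $A^{V[G]}:=p[T]$ is a canonical extension of $A$ (independent of the choice of $T,S$), the assignment $A\mapsto A^{V[G]}$ commutes with complementation and with the real quantifiers $\exists^{\mathbb{R}}$ and $\forall^{\mathbb{R}}$, and sufficiently many first-order statements about $(H(\kappa);\in,A)$ transfer to $(H(\kappa)^{V[G]};\in,A^{V[G]})$ --- this last point being the Feng--Magidor--Woodin-style generic absoluteness enjoyed by universally Baire sets. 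With this in hand I would run an induction up the hierarchy of $L(\mathbb{R})$-sets of reals (equivalently, up the $L_\beta(\mathbb{R})$-hierarchy): at successor and limit stages one uses closure of the universally Baire pointclass under Boolean operations and real quantifiers together with the commutation above, plus the fact that the reinterpretations $A^{V[G]}$ of the sets of reals $A$ of $L(\mathbb{R})^V$ are precisely the sets of reals of $L(\mathbb{R})^{V[G]}$. The outcome is that the satisfaction relation of $L(\mathbb{R})$ on formulas with real parameters from $V$ is computed identically in $V$ and in $V[G]$, which is exactly $L(\mathbb{R})^V\equiv L(\mathbb{R})^{V[G]}$ over parameters in $\mathbb{R}^V$.

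The displayed corollary about projective sentences then comes for free, since every projective set of reals lies in $L(\mathbb{R})$; and it is worth noting that for the projective statement alone one needs far less than a proper class of Woodins, as finitely many Woodin cardinals already make $\mathbf{\Sigma}^1_n$ truth with real parameters generically absolute for each fixed $n$ (Woodin, via the Martin--Steel tree representations), so a reader uneasy about $L(\mathbb{R})$ may substitute that weaker statement throughout --- every functional we shall ever feed into Fact~\ref{abso} is in any case projective. To summarise the division of labour: the only genuinely non-routine ingredient is the one flagged in the first paragraph, namely that a proper class of Woodins forces all $L(\mathbb{R})$-sets of reals to be universally Baire; the verification that $A\mapsto A^{V[G]}$ respects the operations generating $L(\mathbb{R})$ from below, while it must be written out carefully, is mechanical once canonicity of the reinterpretation is available.
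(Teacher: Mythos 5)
The paper offers no proof of Fact~\ref{abso}; it is stated as a citation to Larson (Theorem~3.1.12, via the stationary tower) and Steel (Theorem~7.22, via the derived model theorem), and the parenthetical remark after it records the sharper statement that in a further extension one even gets an elementary embedding $L(\mathbb{R})\to L(\mathbb{R})^{V[G]}$ --- which is really the stationary-tower picture. So you are not competing with an in-text proof, but your sketch does take a genuinely third route, through universal Baireness, and it is worth saying how far it carries you. The input you isolate --- under a proper class of Woodins every set of reals in $L(\mathbb{R})$ is $\kappa$-universally Baire for all $\kappa$ --- is correct and is indeed the crux. But the transfer step as you describe it is not as mechanical as you claim. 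Commutation of $A\mapsto A^{V[G]}$ with Boolean operations and real quantifiers buys you generic absoluteness of \emph{pointclass} statements (and, at each finite level, projective absoluteness), but $L(\mathbb{R})^V\equiv L(\mathbb{R})^{V[G]}$ is a first-order equivalence of structures whose formulas quantify over all ordinals and all elements of $L(\mathbb{R})$, not merely over reals and sets of reals; moreover, $\Theta^{L(\mathbb{R})}$ and the ordinal height of $L(\mathbb{R})$ change when reals are added. Passing from ``each uB set reinterprets canonically'' to ``the $L(\mathbb{R})$-satisfaction relation transfers'' requires an additional idea, such as coding $L(\mathbb{R})$-truth with real parameters as a set of reals and observing that this truth predicate is itself universally Baire (essentially the uB-ness of $\mathbb{R}^\sharp$), or else arguing via a tree for a complete $\Sigma^2_1$ set together with the fact that $L(\mathbb{R})$-truth is $\Delta^2_1$ in the appropriate sense. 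That ingredient should be flagged as a second non-routine input rather than folded into ``mechanical.'' Your closing remark is the right practical observation, though: everything this paper actually feeds into Fact~\ref{abso} is projective, and for projective generic absoluteness the tree-by-tree induction you sketch genuinely does close off with only finitely many Woodins per level, so a reader can substitute that cleaner, fully self-contained result without loss.
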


Woodin's original proof of this theorem is unpublished, but proofs can be found in Larson \cite{Lars04} (Theorem 3.1.12) or in Steel \cite{Stee10} (Theorem 7.22). (In fact, Woodin proved that in a further generic extension there is an elementary embedding $L(\mathbb{R})\rightarrow L(\mathbb{R})^{V[G]}$.) Separately, it is worth noting that $L(\mathbb{R})^{V[G]}\not=L(\mathbb{R})[G]$ in general even for forcings living in $L(\mathbb{R})$.

\bigskip

We can put the results above together as follows:

\begin{thm}\label{noproj} Suppose there is a proper class of Woodin cardinals. Then no projective functional $\mathcal{Q}$ clarfies uncountably many ordinals.
\end{thm}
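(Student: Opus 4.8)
The plan is to argue by contradiction, using Lemma~\ref{countZFC} together with Woodin's generic absoluteness (Fact~\ref{abso}). Suppose $\mathcal{Q}$ is a projective functional for which the set of countable ordinals clarified by $\mathcal{Q}$ is uncountable. Since a subset of $\omega_1$ is uncountable precisely when it is cofinal in $\omega_1$, this hypothesis is equivalent to the sentence $\varphi_\mathcal{Q}$: \emph{for every real coding a well-ordering there is a real coding a well-ordering of strictly larger order type whose order type is clarified by $\mathcal{Q}$}. Unwinding ``clarified'' — a universal quantifier over pairs of $\omega$-copies followed by an existential quantifier over Turing reductions, with the graph of $\mathcal{Q}$ entering projectively — shows that $\varphi_\mathcal{Q}$ is a projective sentence (with a real parameter from $V$ if the defining formula of $\mathcal{Q}$ carries one). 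Hence, by Facts~\ref{indest} and~\ref{abso}, $\varphi_\mathcal{Q}$ is absolute between $V$ and each of its set-generic extensions, and so continues to hold in all of them.

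Now set $\alpha=(2^{\aleph_0})^{+V}$ and force with $\mathbb{P}=Col(\omega,\alpha)$ to obtain $V[G]$. As $|\mathbb{P}|=\alpha$, $\mathbb{P}$ collapses every ordinal $\le\alpha$ to $\omega$ while preserving $\alpha^{+V}$; thus $\omega_1^{V[G]}=\alpha^{+V}$, so the countable ordinals of $V[G]$ are exactly the ordinals below $\alpha^{+V}$. Fix any $\beta$ with $\alpha\le\beta<\alpha^{+V}$. Then $|\beta|^V=\alpha$, and therefore, entirely inside $V$, there is a forcing isomorphism $Col(\omega,\alpha)\cong Col(\omega,\beta)\cong Col(\omega,\beta)\times Col(\omega,\beta)$ — each of these posets consists of the finite partial functions from a countable set into a set of size $\alpha$. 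Transporting $G$ across this isomorphism realizes $V[G]$ as a $(Col(\omega,\beta)\times Col(\omega,\beta))$-generic extension of $V$ whose two coordinates read off a pair $\mathfrak{B}_0,\mathfrak{B}_1$ of mutually $Col(\omega,\beta)$-generic (over $V$) $\omega$-copies of $\beta$, with $V[G]=V[\mathfrak{B}_0,\mathfrak{B}_1]$. Applying Lemma~\ref{countZFC} with $\beta$ in the role of its ``$\alpha$'' (legitimate since $\beta\ge(2^{\aleph_0})^{+V}$) and $Col(\omega,\beta)\times Col(\omega,\beta)$ in the role of its ``$\mathbb{P}$'' gives $V[G]=V[\mathfrak{B}_0,\mathfrak{B}_1]\models$ ``$\mathcal{Q}$ does not clarify $\beta$''.

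Since $\beta\in[\alpha,\alpha^{+V})$ was arbitrary, no countable ordinal of $V[G]$ in the interval $[\alpha,\omega_1^{V[G]})$ is clarified by $\mathcal{Q}$ in $V[G]$; hence the countable ordinals clarified by $\mathcal{Q}$ in $V[G]$ all lie below $\alpha$, and so form a set bounded below $\omega_1^{V[G]}$ — i.e. a countable set in $V[G]$. Thus $\varphi_\mathcal{Q}$ fails in $V[G]$, contradicting the first paragraph, and the theorem follows.

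I expect the step needing the most care to be the re-reading, uniformly in $\beta$, of the single collapse $Col(\omega,(2^{\aleph_0})^{+V})$ as the double collapse $Col(\omega,\beta)\times Col(\omega,\beta)$ demanded by Lemma~\ref{countZFC}: this is exactly why the collapse is started at $(2^{\aleph_0})^+$ rather than lower, since that choice simultaneously forces $|\beta|^V$ to equal the fixed cardinal $(2^{\aleph_0})^{+V}$ for every $\beta$ below the new $\omega_1$ (so the poset factorization is available and lies in $V$) and guarantees $\beta\ge(2^{\aleph_0})^{+V}$ (so Lemma~\ref{countZFC} applies at all). The remaining checks are routine: that ``$\mathcal{Q}$ clarifies $\gamma$'' is uniformly projective in a code for $\gamma$ and in the projective definition of $\mathcal{Q}$, and that $V[\mathfrak{B}_0,\mathfrak{B}_1]$ really exhausts $V[G]$. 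If one prefers to avoid the poset isomorphism, one may instead take $\mathfrak{B}_0,\mathfrak{B}_1$ to be any mutually $Col(\omega,\beta)$-generic copies of $\beta$ obtained inside $V[G]$, let $N=V[\mathfrak{B}_0,\mathfrak{B}_1]$ be the intermediate model they generate, apply Lemma~\ref{countZFC} in $N$, and then invoke Grigorieff's theorem (Fact~\ref{Grigorieff}) together with Facts~\ref{indest} and~\ref{abso} to pull non-clarification of $\beta$ back up to $V[G]$; the same outline, using the $L(\mathbb{R})$-form of Fact~\ref{abso}, handles every functional lying in $L(\mathbb{R})$.
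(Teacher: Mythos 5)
Your proof is correct, and it takes a genuinely different (and in some ways leaner) route than the paper's. The paper forces with the full Levy collapse of the least inaccessible $\kappa$, obtains for each $\alpha<\kappa$ a proper intermediate model $M_\alpha\models$ ``$\mathcal{Q}$ does not clarify $\alpha$'' via Lemma~\ref{countZFC}, and then needs the full machinery of Grigorieff's theorem (Fact~\ref{Grigorieff}) plus Levy--Solovay indestructibility (Fact~\ref{indest}) to transfer each of these facts upward into $V[G]$ before transferring ``only countably many are clarified'' back down to $V$. You sidestep all of that intermediate-model bookkeeping with the observation that for $\alpha=(2^{\aleph_0})^{+V}$ and any $\beta\in[\alpha,\alpha^{+V})$, the poset $Col(\omega,\alpha)$ is literally isomorphic in $V$ to $Col(\omega,\beta)\times Col(\omega,\beta)$, so that $V[G]$ \emph{is} the intermediate model $V[\mathfrak{B}_0,\mathfrak{B}_1]$ to which Lemma~\ref{countZFC} applies --- no Grigorieff needed, and no inaccessible: a single collapse of $(2^{\aleph_0})^+$ suffices, with $\omega_1^{V[G]}=\alpha^{+V}$ playing the role the paper assigns to $\kappa$. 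You also phrase the final contradiction by reformulating ``uncountably many countable ordinals are clarified'' as the projective sentence $\varphi_{\mathcal{Q}}$ asserting cofinality in $\omega_1$ and transferring it from $V$ up to $V[G]$, whereas the paper transfers the negation down; these are of course equivalent uses of Fact~\ref{abso}. Two small remarks: the citation of Fact~\ref{indest} in your first paragraph is superfluous for your main path, since Fact~\ref{abso} already gives absoluteness between $V$ and its generic extensions once $V$ has a proper class of Woodins (you only need indestructibility in the Grigorieff-based alternative you sketch at the end); and it is worth noting explicitly that $|Col(\omega,\alpha)|=\alpha$ so that $\alpha^{+V}$ is indeed preserved, which you rely on for $\omega_1^{V[G]}=\alpha^{+V}$. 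Net, your version buys a more elementary forcing setup and fewer black boxes; the paper's version, by routing through arbitrary intermediate models, is set up to generalize more transparently to a single collapse realizing all countable $\alpha$ at once, which is closer in spirit to the Solovay-model tradition.
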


\begin{proof} Let $\mathcal{Q}$ be a projective functional. By Fact \ref{abso}, any two projective definitions of $\mathcal{Q}$ in $V$ will continue to define the same functional in any generic extension of $V$, so we may freely conflate $\mathcal{Q}$ with some fixed definition. In particular, we can (pretend to) apply $\mathcal{Q}$ to reals in generic extensions of $V$.

Let $\kappa$ be the least inaccessible. Let $\mathbb{P}$ be the ``full" Levy collapse of everything below $\kappa$ to $\omega$; precisely, $\mathbb{P}$ is the finite support product over $\alpha\in[\omega,\kappa)$ of $Col(\omega,\alpha)$. Forcing with $\mathbb{P}$ yields for each infinite $\alpha<\kappa$ a pair of mutually $Col(\omega,\alpha)$-generics over $V$ (e.g. from  the $Col(\omega,\alpha)\times Col(\omega,\alpha+1)$ factor). By Lemma \ref{countZFC}, if $V[G]$ is the full generic extension by $\mathbb{P}$ then we have for each $\omega\le\alpha<\kappa$ an intermediate model $M_\alpha\models\mathsf{ZFC}$ between $V$ and $V[G]$ such that $M_\alpha\models$ ``$\mathcal{Q}$ does not clarify $\alpha$."

By Fact \ref{Grigorieff}, $V[G]$ is a generic extension of $M_\alpha$. By Fact \ref{indest}, the $M_\alpha$s still satisfy our large cardinal hypotheses. Since ``$\mathcal{Q}$ clarifies $\alpha$" is projective as long as $\mathcal{Q}$ is, this lets us apply Fact \ref{abso} twice. First, for each $\alpha$ we ``transfer upwards" the sentence ``$\alpha$ is not $\mathcal{Q}$-clarified" from $M_\alpha$ to $V[G]$. Second, we go downwards from $V[G]$ to $V$, this time transferring the sentence ``Only countably many countable ordinals are $\mathcal{Q}$-clarified" (which holds in $V[G]$ since $\kappa=\omega_1^{V[G]}$ by standard results about the Levy collapse).
\end{proof}








\section{Open questions}
We close by mentioning a couple directions for further research.

\bigskip

The most immediate open problem is simply to close the gap between the lower and upper bounds on the smallest non-$\mathcal{O}$-clarifiable ordinal provided in this paper:

\begin{qus} What is the least ordinal $\alpha$ witnessing that $\mathcal{O}^\Box$ does not clarify $\mathbb{W}$?
\end{qus}

Note that Theorems \ref{upperbound} and \ref{lowerbound} together parallel the upper- and lower-bound results from Schweber \cite{Schw??} approximating the smallest ordinal $\alpha$ such that the supremum of the ordinals Medvedev-reducible to $\alpha$ is strictly smaller than the next admissible above $\alpha$. While these two questions do not appear directly related, this situation does suggest that the same techniques may be relevant for their resolution.

\bigskip

A second question, along similar lines, is the following: 

\begin{qus} Suppose that there is a proper class of Woodin cardinals. For a given projective functional $\mathcal{Q}\in\Pi^1_n\wedge\Sigma^1_n$, can we provide a reasonable upper bound on the first $\mathcal{Q}$-unclarifiable ordinal?
\end{qus}

Technically an upper bound can be extracted from the proof of Theorem \ref{noproj}, but it is truly terrible. We can adapt the argument of Theorem \ref{upperbound} to partially address this, but the result is still not very satisfying: the issue is that we need, in place of ``admissible set," an analogous notion such that $(i)$ each set of this type computes $\Pi^1_n\wedge\Sigma^1_n$ facts in a globally $\Sigma_{n-1}\vee\Pi_{n-1}$ way and $(ii)$ the class of such sets is closed under set forcing (and a couple other conditions besides). However, there does not appear to yet be such a notion which is as structurally well-understood as admissibility.

\bigskip

Finally, there is also a related question on continuous reducibility (cf. Pauly \cite{Paul10} for background on that notion):

\begin{qus} Suppose $\mathcal{F}$ clarifies $\mathbb{W}$. Must $\mathcal{O}^\Box$ be continuously reducible to $\mathcal{F}$?
\end{qus}

That is, if $\mathcal{F}$ clarifies $\mathbb{W}$ must there be a pair of continuous functionals $\mathcal{R}_0,\mathcal{R}_1$ such that $\mathcal{R}_1\circ\mathcal{F}\circ\mathcal{R}_0=\mathcal{O}$? There are also a number of variants of continuous reducibility which have been studied, and the analogous question for any such variant is also interesting. I suspect that the answer to the question is affirmative (and this paper grew out of an attempt to prove that suspicion), but at the moment I do not even know whether there must be a {\em Borel} reduction of $\mathcal{O}^\Box$ to any such $\mathcal{F}$.







\end{document}